\documentclass{article}
\usepackage{amsmath,amssymb,amsthm}
\usepackage{cases}
\usepackage{tikz}
\usepackage{mathtools}
\usepackage{stmaryrd}
\usepackage{youngtab}
\usepackage{enumerate}
\usepackage{caption}
\captionsetup[figure]{format=plain, labelformat=simple, labelsep=period}
\captionsetup[table]{format=plain, labelformat=simple, labelsep=period}
\usepackage{graphicx}
\usetikzlibrary{intersections, calc, arrows.meta}
\newcommand{\mcS}{\mathcal{S}}

\newcommand{\mcG}{\mathcal{G}}
\newcommand{\mcF}{\mathcal{F}}
\newcommand{\mcOH}{\mathcal{O}}
\newcommand{\mcY}{\mathcal{Y}}
\newcommand{\mcH}{\mathcal{H}}
\newcommand{\mcP}{\mathcal{P}}
\newcommand{\mcC}{\mathcal{C}}
\newcommand{\mcN}{\mathcal{N}}
\newcommand{\la}{\langle}
\newcommand{\ra}{\rangle}
\newcommand{\lala}{\lambda = (\lambda_{1} , \ldots , \lambda_{m})}
\newcommand{\BN}{\mathbb{N}}
\newcommand{\BZ}{\mathbb{Z}}

\newcommand{\BR}{\mathbb{R}}

\newcommand{\BD}{\mathbb{D}}
\newcommand{\xra}{\xrightarrow}
\newcommand{\ol}{\overline}

\newcommand{\bs}{\setminus}
\newcommand{\bracket}[1]{\llbracket #1 \rrbracket}

\allowdisplaybreaks
\theoremstyle{definition}
\newtheorem{thm}{Theorem}[section]
\newtheorem{defi}[thm]{Definition}
\newtheorem{lemm}[thm]{Lemma}
\newtheorem{prop}[thm]{Proposition}
\newtheorem{exam}[thm]{Example}
\newtheorem{rema}[thm]{Remark}
\newtheorem{coro}[thm]{Corollary}

\numberwithin{equation}{section}
\title{Game positions of

Multiple Hook Removing Game}
\author{Yuki Motegi}
\date{}
\begin{document}
\maketitle
\begin{abstract}
Multiple Hook Removing Game (MHRG for short) introduced in \cite{AT} is an impartial game played in terms of Young diagrams. In this paper, we give a characterization of the set of all game positions in MHRG. As an application, we prove that for $t \in \BZ_{\geq 0}$ and $m, n \in \BN$ such that $t \leq m \leq n$, and a Young diagram $Y$ contained in the rectangular Young diagram $Y_{t,n}$ of size $t \times n$, $Y$ is a game position in MHRG with $Y_{m,n}$ the starting position if and only if $Y$ is a game position in MHRG with $Y_{t,n-m+t}$ the starting position, and also that the Grundy value of $Y$ in the former MHRG is equal to that in the latter MHRG.
\end{abstract}

\footnote[0]{\noindent
Mathematics Subject Classification 2020: Primary 91A46 ; Secondary 06A07 \\
Key words and phrases: Young diagram, hook, combinatorial game, Grundy value
}

\section{Introduction.}\label{sec1}

The Sato-Welter game is an impartial game studied by Welter \cite{Welter} and Sato \cite{Sato1}, independently. This game is played in terms of Young diagrams. The rule is given as follows:
\begin{enumerate}[(i)]
\item The starting position is a Young diagram $Y$.
\item Assume that a Young diagram $Y'$ appears as a game position. A player chooses a box $(i,j) \in Y'$, and moves game position from $Y'$ to $Y'\la i,j \ra$, where $Y'\la i,j \ra$ is the Young diagram which is obtained by removing the hook at $(i,j)$ from $Y'$ and filling the gap between two diagrams (see Figure~\ref{fig} below). \label{ii}
\item The (unique) ending position is the empty Young diagram $\emptyset$. The winner is the player who makes $\emptyset$ after his/her operation (\ref{ii}).
\end{enumerate}
Kawanaka \cite{Ka} introduced the notion of a plain game, as a generalization of the Sato-Welter game. A plain game is played in terms of $d$-complete posets which was introduced and classified by Proctor \cite{P1,P2}, and can be thought of as a generalization of Young diagrams. It is known that $d$-complete posets are closely related to not only the combinatorial game theory, but also the representation theory and the algebraic geometry associated with simply-laced finite-dimensional simple Lie algebras. For example, the weight system of a minuscule representation (which is identical to the Weyl group orbit of a minuscule fundamental weight) for a simply-laced finite-dimensional simple Lie algebra can be described in terms of a $d$-complete poset. Applying the ``folding'' technique to this fact for the simply-laced case, Tada \cite{Tada} described the Weyl group orbits of some fundamental weights for multiply-laced finite-dimensional simple Lie algebras in terms of $d$-complete posets with ``coloring''.
\begin{center}
\begin{tikzpicture}
\draw(0,0)node{$d$-complete poset};
\draw(0,-0.5)node{with a ``coloring''};
\draw(4.6,-0.25)node{multiply-laced};
\draw[<->] (1.5,-0.25)--(3.3,-0.25);
\draw(2.4,0)node{\cite{Tada}};
\draw(-4.6,1.5)node{plain game};
\draw[<->](-3.25,1.5)--(-1.5,1.5);
\draw(-2.25,1.75)node{\cite{Ka}};
\draw(0,1.5)node{$d$-complete poset};
\draw[<->] (1.5,1.5)--(3.3,1.5);
\draw(4.6,1.5)node{simply-laced};
\draw[->](0,1.2)--(0,0.3);
\draw[->](4.6,1.1)--(4.6,0.2);
\draw(5.5,0.6)node{folding};
\draw(1.05,0.6)node{``folding''};
\draw(0,3)node{Young diagram};
\draw[->](0,2.75)--(0,1.8);
\draw(1.4,2.25)node{generalization};
\draw(-4.6,3)node{Sato-Welter game};
\draw[<->](-3,3)--(-1.5,3);
\draw(-2.25,3.25)node{\cite{Welter,Sato1}};
\draw[<->](1.5,3)--(3.6,3);
\draw(4.6,3)node{type A};
\draw[->](0,-0.75)--(0,-1.6);
\draw(1.2,-1.15)node{special case};
\draw(0,-1.9)node{Young diagram};
\draw(0,-2.3)node{with the unimodal numbering};
\draw[<->](-2.8,-2.1)--(-3.8,-2.1);
\draw(-4.6,-2.1)node{MHRG};
\draw(-3.3,-1.85)node{\cite{AT}};
\draw(4.6,-2.1)node{types B and C};
\draw[<->](3.3,-2.1)--(2.7,-2.1);
\end{tikzpicture}
\end{center}

Based on \cite{Tada}, Abuku and Tada \cite{AT} introduced a new impartial game, named Multiple Hook Removing Game (MHRG for short). MHRG is played in terms of Young diagrams with the unimodal numbering; for the definition of unimodal numbering, see Section~\ref{sec3} and Example~\ref{EMN}. Let us explain the rule of MHRG. We fix positive integers $m, n \in \BN$ such that $m \leq n$. Let $Y_{m,n} \coloneqq \{(i,j) \in \BN^{2} \mid 1 \leq i \leq m,\, 1 \leq j \leq n\}$ be the rectangular Young diagram of size $m \times n$. We denote by $\mcF (Y_{m,n})$ the set of all Young diagrams contained in the rectangular Young diagram $Y_{m,n}$. For a game position $G$ of an impartial game, we denote by $\mcOH (G)$ the set of all options of $G$. The rule of MHRG is given as follows:
\begin{enumerate}[(1)]
\item All game positions are some Young diagrams contained in $\mcF(Y_{m,n})$ with the unimodal numbering. The starting position is the rectangular Young diagram $Y_{m,n}$.
\item Assume that $Y \in \mcF(Y_{m,n})$ appears as a game position. If $Y \neq \emptyset$ (the empty Young diagram), then a player chooses a box $(i,j) \in Y$, and remove the hook at $(i,j)$ in $Y$. We denote by $Y\la i,j \ra$ the resulting Young diagram. Then we know from \cite[Lemma~3.15]{AT} (see also Lemma~\ref{at most} below) that $f \coloneqq \# \{(i',j') \in Y\la i,j \ra \mid \mcH_{Y\la i,j \ra}(i',j') = \mcH_{Y}(i,j)\ \text{(as multisets)}\} \leq 1$ ,where $\mcH_{Y}(i,j)$ (resp., $\mcH_{Y\la i,j \ra}(i',j')$) is the numbering multiset for the hook at $(i,j) \in Y$ (resp., $(i',j') \in Y\la i,j \ra$); see Section~\ref{sec3}. If $f = 0$, then a player moves $Y$ to $Y\la i,j \ra \in \mcOH(Y)$. If $f = 1$, then a player moves $Y$ to $(Y\la i,j \ra)\la i',j' \ra \in \mcOH(Y)$, where $(i', j') \in Y\la i,j \ra$ is the unique element such that $\mcH_{Y\la i,j \ra}(i', j') = \mcH_{Y}(i,j)$.
\item The (unique) ending position is the empty Young diagram $\emptyset$. The winner is the player who makes $\emptyset$ after his/her operation (2).
\end{enumerate}

In general, not all Young diagrams in $\mcF(Y_{m,n})$ appear as game positions of MHRG (see Example~\ref{Emotiv}). The goal of this paper is to give a characterization of the set of all game positions in MHRG. Let us explain our results more precisely. Let $\binom{[1,m+n]}{m}$ denote the set of all subsets of $[1,m+n] \coloneqq \{x \in \BN \mid 1 \leq x \leq m+n\}$ having $m$ elements. Then there exists a bijection $I$ from $\mcF (Y_{m,n})$ onto $\binom{[1,m+n]}{m}$ (see Subsection~\ref{subsec21} below). Let $Y^{D}$ denote the dual Young diagram of $Y$ in $Y_{m,n}$ (see Subsection~\ref{subsec21}).  We set $c \coloneqq (m+n-1+\chi)\, /\, 2$, where $\chi = 0$ (resp., $\chi = 1$) if $m+n$ is odd (resp., even). For $Y \in \mcF (Y_{m,n})$, we set $I_{R}(Y) \coloneqq I(Y)\, \cap\, [c+1-\chi, m+n]$. We denote by $\mcS (Y_{m,n})$ the set of all those Young diagrams in $\mcF (Y_{m,n})$ which appear as game positions of MHRG (with $Y_{m,n}$ the starting position).
\begin{thm}[= Theorem~\ref{main}]\label{intro0}
Let $Y \in \mcF(Y_{m,n})$, and $\lala$ the partition corresponding to $Y$. The following (I), (I\hspace{-0.2mm}I), (I\hspace{-0.2mm}I\hspace{-0.2mm}I), and (I\hspace{-0.2mm}V) are equivalent. \\
(I) $Y \in \mcS(Y_{m,n})$. \hspace{5mm} (I\hspace{-0.2mm}I) $Y^{D} \in \mcS(Y_{m,n})$. \hspace{5mm} (I\hspace{-0.2mm}I\hspace{-0.2mm}I) $I_{R}(Y) \cap I_{R}(Y^{D}) = \emptyset$. \\
(I\hspace{-0.2mm}V) $\lambda_{i}+\lambda_{j} \neq n-m+i+j-1$ for all $1 \leq i \leq j \leq m$.
\end{thm}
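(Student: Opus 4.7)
The first step is to establish (III)$\Leftrightarrow$(IV) by direct translation through the bijection $I$. Under the standard identification $I(Y) = \{\lambda_{i} + m - i + 1 : 1 \le i \le m\} \subseteq [1, m+n]$, the duality $\lambda^{D}_{i} = n - \lambda_{m+1-i}$ yields $I(Y^{D}) = \{(m+n+1) - x : x \in I(Y)\}$, i.e., the reflection of $I(Y)$ through $(m+n+1)/2$. Condition (IV) rewrites as $(\lambda_{i} + m - i + 1) + (\lambda_{j} + m - j + 1) \ne m + n + 1$ for all $1 \le i \le j \le m$, which is the statement that no two elements of $I(Y)$ (with repetition allowed) sum to $m+n+1$. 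A short parity case analysis on $c = (m+n-1+\chi)/2$ then shows that $I_{R}(Y) \cap I_{R}(Y^{D}) \ne \emptyset$ precisely when such a pair exists: the larger element of any pair $\{a, m+n+1-a\} \subseteq I(Y)$ automatically lies in $[c+1-\chi, m+n]$, and conversely any element of $I_{R}(Y) \cap I_{R}(Y^{D})$ produces such a pair. Moreover, the reflection $x \mapsto m+n+1-x$ carries $I(Y)$ to $I(Y^{D})$ while preserving the sum condition, so (IV) is self-dual under $Y \leftrightarrow Y^{D}$; hence (II)$\Leftrightarrow$(IV) follows immediately once (I)$\Leftrightarrow$(IV) is established.

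For (I)$\Rightarrow$(IV), I would induct on the number of MHRG moves from $Y_{m,n}$. Base case: at $Y = Y_{m,n}$ one has $\lambda_{i} = n$, so $\lambda_{i} + \lambda_{j} = 2n > n - m + i + j - 1$ for $i + j \le 2m \le m + n$, confirming (IV). For the inductive step, I must show that each MHRG move preserves (IV). A single hook removal at $(i,j) \in Y$ replaces $\lambda_{i} + m - i + 1 \in I(Y)$ by $\lambda_{i} + m - i + 1 - \mcH_{Y}(i,j)$, a value not previously in $I(Y)$; when $f = 1$, Lemma~3.15 of \cite{AT} pins down a second coupled swap induced by the unique box $(i', j')$ sharing the hook-numbering multiset. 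I would verify that if the resulting $I$-set acquired two elements summing to $m+n+1$, then either (IV) failed for $Y$ already, or the unimodal numbering would force a second matching multiset, violating $f \le 1$.

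For (IV)$\Rightarrow$(I), I again induct, now on $|Y_{m,n}| - |Y|$: given $Y \ne Y_{m,n}$ satisfying (IV), I exhibit a predecessor $Y' \supsetneq Y$ with $Y \in \mcOH(Y')$ that still satisfies (IV); the inductive hypothesis then gives $Y' \in \mcS(Y_{m,n})$, hence $Y \in \mcS(Y_{m,n})$. In the $I$-encoding this is an ``upward swap'' $a \to a' > a$ with $a' \notin I(Y)$ (plus, in the $f = 1$ case, a paired second swap) that avoids introducing a pair summing to $m+n+1$. Existence of such a predecessor should follow from the observation that $I(Y) \ne \{n+1, \ldots, n+m\} = I(Y_{m,n})$ leaves an element free to be increased, combined with the sum-free condition leaving enough room to carry out at least one such increase without forming a forbidden pair.

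The principal obstacle is the $f = 1$ case, in which two hooks are removed at once and two elements of $I(Y)$ change simultaneously. A clean description of this paired operation as a local transformation on $I(Y)$, governed by the unimodal numbering and \cite[Lemma~3.15]{AT}, is what I would try to establish as the central intermediate lemma. With such a description, both the forward preservation of (IV) under a doubled move and the backward construction of a compatible predecessor should reduce to tracking at most four elements of $I(Y)$ simultaneously, and I would exploit the self-duality of (IV) to cut the case analysis roughly in half, since under $Y \leftrightarrow Y^{D}$ a doubled swap corresponds to its own reflected move.
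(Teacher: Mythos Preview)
Your plan is essentially the paper's own proof: it too establishes (III)$\Leftrightarrow$(IV) by direct translation through $I$, obtains (II)$\Leftrightarrow$(III) from (I)$\Leftrightarrow$(III) by the self-duality of (III), proves (I)$\Rightarrow$(III) by induction along a sequence of moves from $Y_{m,n}$, and proves (III)$\Rightarrow$(I) by descending induction constructing an explicit predecessor. The ``clean description of the paired operation on $I(Y)$'' that you flag as the central lemma is exactly what the paper supplies (its Proposition~5.4 together with Lemma~4.3): a single hook removal $Y \xrightarrow{l,r} Y\langle i,j\rangle$ swaps $r$ for $l-1$ in $I(Y)$, and the move is (MHR~2) precisely when $\ol{l-1} \in I(Y)\setminus\{r\}$ or $l-1 = \ol{l-1}$, in which case the doubled move replaces $\{r,\ol{l-1}\}$ by $\{l-1,\ol{r}\}$---a reflection-symmetric swap that visibly preserves the ``no two elements sum to $m+n+1$'' condition.
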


\begin{thm}[= Theorem~\ref{t rows}]\label{intro1}
Let $t \in \BZ_{\geq 0}$ and $m,n \in \BN$ such that $t \leq m \leq n$. For a Young diagram $Y$ having at most $t$ rows, $Y \in \mcS(Y_{m,n})$ if and only if $Y \in \mcS(Y_{t,n-m+t})$. Moreover, the Grundy value of $Y$ as an element of $\mcS(Y_{m,n})$ is equal to the Grundy value of $Y$ as an element of $\mcS(Y_{t,n-m+t})$.
\end{thm}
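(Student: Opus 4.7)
My approach has two components: the set equality follows almost immediately from Theorem~\ref{intro0}(IV), while the equality of Grundy values needs an induction that isolates a small compatibility lemma for the unimodal numbering.

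For the membership equivalence, I write $\lala$ with $\lambda_{i} = 0$ for $i > t$ and apply Theorem~\ref{intro0}(IV). Since $(n-m+t) - t = n - m$, the characterization (IV) for $\mcS(Y_{m,n})$ is the same inequality $\lambda_{i} + \lambda_{j} \neq n - m + i + j - 1$ as for $\mcS(Y_{t,n-m+t})$, only with the index range $1 \leq i \leq j \leq m$ widened beyond $1 \leq i \leq j \leq t$. Assuming $Y \in \mcS(Y_{m,n})$, specializing to $i = 1$, $j = t+1, \ldots, m$ rules out $\lambda_{1} \in \{n-m+t+1, \ldots, n\}$ and forces $\lambda_{1} \leq n - m + t$; hence $Y \in \mcF(Y_{t,n-m+t})$, and the surviving inequalities for $1 \leq i \leq j \leq t$ give $Y \in \mcS(Y_{t,n-m+t})$. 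Conversely, for $Y \in \mcS(Y_{t,n-m+t})$ the extra inequalities in the wider range are automatic: for $1 \leq i \leq t < j$, one has $n - m + i + j - 1 \geq n - m + t + 1 > \lambda_{1} \geq \lambda_{i}$, and for $t < i \leq j$, $\lambda_{i} + \lambda_{j} = 0$ while $n - m + i + j - 1 \geq 1$.

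For the Grundy-value equality I would use strong induction on $|Y|$, the base case $Y = \emptyset$ being trivial. The inductive step reduces to showing that $\mcOH_{Y_{m,n}}(Y)$ and $\mcOH_{Y_{t,n-m+t}}(Y)$ coincide as sets of Young diagrams, since the first part of the theorem combined with the induction hypothesis then equates the two mex computations. Using the $I$-set description, when $Y$ has $\leq t$ rows the initial block $\{1, \ldots, m-t\} \subseteq I(Y)$ is untouched by any Sato-Welter-style hook removal and maxima cannot grow, so both $Y\la i,j \ra$ (case $f=0$) and $(Y\la i,j\ra)\la i',j' \ra$ (case $f=1$) lie in $\mcF(Y_{t,n-m+t})$. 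In particular, for any $(i,j) \in Y$ the whole hook and every intermediate diagram sit inside the top-left $t \times (n-m+t)$ sub-rectangle of $Y_{m,n}$. It then suffices to verify the following compatibility lemma: the unimodal number at a cell $(i,j)$ with $i \leq t$ and $j \leq n-m+t$ is the same whether computed in $Y_{m,n}$ or in $Y_{t,n-m+t}$. Granting this, the multiset $\mcH_{Y}(i,j)$, the integer $f$, and (when $f=1$) the witness $(i',j')$ all transport between the two games, so the options do as well.

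The main obstacle is exactly this compatibility lemma; it is a direct but careful unravelling of the definition of unimodal numbering in Section~\ref{sec3}. I expect the numbering at $(i,j)$ to be given by a formula whose value depends only on local data shared by the two rectangles and is insensitive to the $m - t$ excess rows and columns, so that restriction to the sub-rectangle matches the unimodal numbering of $Y_{t,n-m+t}$ on the nose. Once this is verified, the remainder of the Grundy-value argument is routine mex-bookkeeping via the induction hypothesis.
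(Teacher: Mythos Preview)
Your membership argument via condition (IV) of Theorem~\ref{intro0} is exactly the paper's approach, and your overall strategy for the Grundy values (induction on $|Y|$, reducing to equality of option sets) is also the paper's. The one genuine gap is your compatibility lemma: it is \emph{false} as stated. In $Y_{m,n}$ one has $c(i,j)=\min(j-i+m,\,i-j+n)$, while in $Y_{t,n-m+t}$ one has $c'(i,j)=\min(j-i+t,\,i-j+n-m+t)=c(i,j)-(m-t)$. So the two numberings differ by the constant $m-t$, not by zero; for instance with $m=3$, $n=5$, $t=2$ the cell $(1,1)$ carries $3$ in $Y_{3,5}$ but $2$ in $Y_{2,4}$. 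Your expectation that the formula ``is insensitive to the $m-t$ excess rows and columns'' is therefore wrong on the nose.

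The fix is immediate and is what the paper does: since every $c(i,j)$ is shifted by the \emph{same} constant $m-t$, equality of numbering multisets is preserved, i.e.\ $\mcH_{Y}(i,j)=\mcH_{Y\la i,j\ra}(i',j')$ in the $Y_{m,n}$ numbering if and only if the same equality holds in the $Y_{t,n-m+t}$ numbering. Hence the integer $f$ and the witness $(i',j')$ transport between the two games exactly as you want, and $\mcOH_{Y_{m,n}}(Y)=\mcOH_{Y_{t,n-m+t}}(Y)$ follows. Replace your lemma by ``$c'(i,j)=c(i,j)-(m-t)$ for all $(i,j)$ with $i\le t$ and $j\le n-m+t$'' and the rest of your argument goes through unchanged.
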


This paper is organized as follows. In Section~\ref{sec2}, we fix our notation for Young diagrams, and recall some basic facts on the combinatorial game theory. In Section~\ref{sec3}, we recall the definition of the unimodal numbering and the diagonal expression for Young diagrams. In Section~\ref{sec4}, we recall the rule of MHRG, and a basic property (Lemma~\ref{at most}). In Sections~\ref{sec5} and \ref{sec6}, we prove Theorems~\ref{intro0} and~\ref{intro1} above, respectively.

\subsubsection*{Acknowledgements :}
The author would like to thank Daisuke Sagaki, who is his supervisor, for useful discussions. He also thanks Tomoaki Abuku and Masato Tada for valuable comments.

\section{Preliminaries.}\label{sec2}
\subsection{Young diagrams.}\label{subsec21}
Let $\BN$ denote the set of positive intgers. For $a,b \in \BZ$, we set $[a,b] \coloneqq \{x \in \BZ \mid a \leq x \leq b\}$. Throughout this paper, we fix $m ,n \in \BN$ such that $m \leq n$. For a positive integer $x \in \BN$, we set $\ol{x} \coloneqq m+n+1-x$. Let $\mcY_{m}(m+n)$ be the set of partitions $\lala$ of length at most $m$ such that $n \geq \lambda_{1} \geq \cdots \geq \lambda_{m} \geq 0$. We can identify $\lala \in \mcY_{m}(m+n)$ with the {\it Young diagram} $Y_{\lambda} \coloneqq \{(i, j) \in \BN^{2} = \BN \times \BN \mid 1 \leq i \leq m, 1 \leq j \leq \lambda_{i}\}$ of shape $\lambda$; if $\lambda = (0 , 0 , \ldots , 0) \in \mcY_{m}(m+n)$, then we denote $Y_{\lambda}$ by $\emptyset$, and call it the {\it empty Young diagram}.
We identify $(i,j) \in Y_{\lambda}$ with the square in $\BR^{2}$ whose vertices are $(i-1, j-1), (i-1, j), (i, j-1),$ and $(i, j)$; elements in $Y_{\lambda}$ are called {\it boxes} in $Y_{\lambda}$. Let $Y_{m,n} \coloneqq \{(i, j) \in \BN^{2} \mid 1 \leq i \leq m, 1 \leq j \leq n\}$ be the rectangular Young diagram of size $m \times n$, which corresponds to $(n , n , \ldots , n) \in \mcY_{m}(m+n)$. Set $\mcF(Y_{m,n}) \coloneqq \{Y_{\lambda} \mid \lambda \in \mcY_{m}(m+n)\}$; notice that $\mcF(Y_{m,n})$ is identical to the set of all Young diagrams contained in the rectangular Young diagram $Y_{m,n}$. We set $\lambda^{D} \coloneqq (n-\lambda_{m} , \ldots , n-\lambda_{1}) \in \mcY_{m}(m+n)$. The Young diagram $Y_{\lambda}^{D} \coloneqq Y_{\lambda^{D}}$ is called the {\it dual Young diagram} of $Y_{\lambda}$ (in $Y_{m,n}$).
\begin{center}
\begin{tikzpicture}
\draw(-1,-1.5)node{$Y_{m,n}\ =\ \ \ $};
\draw(0,0.2)node[above,left]{$(0,0)$};
\draw(0.5,0)node[above]{$1$};
\draw(0,-0.5)node[left]{$1$};
\draw(1,0)node[above]{$2$};
\draw(0,-1)node[left]{$2$};
\draw[line width=1pt,->] (0,0)--(4,0)node[right]{$j$};
\draw[line width=1pt,->] (0,0)--(0,-3)node[below]{$i$};
\draw(0.5,-0.5)node{$\bullet$};
\draw(0.5,-1)node{$\bullet$};
\draw(1,-0.5)node{$\bullet$};
\draw(1,-1)node{$\bullet$};
\draw(2,-1.5)node{$\ddots$};
\draw(0.5,-2.25)node{$\bullet$};
\draw(1,-2.25)node{$\bullet$};
\draw(0,-2.25)node[left]{$m$};
\draw(3,0)node[above]{$n$};
\draw(3,-0.5)node{$\bullet$};
\draw(3,-1)node{$\bullet$};
\draw(3,-2.25)node{$\bullet$};

\draw(0,-0.5)--(3,-0.5);
\draw(0,-1)--(3,-1);
\draw(0,-2.25)--(3,-2.25);
\draw(0.5,0)--(0.5,-2.25);
\draw(1,0)--(1,-2.25);
\draw(3,0)--(3,-2.25);

\draw(4.5,-1.5)node{$=$};

\draw[line width=1pt,->] (5.25,0)--(9.25,0)node[right]{$j$};
\draw[line width=1pt,->] (5.25,0)--(5.25,-3)node[below]{$i$};
\draw(5.25,-2.25)node[left]{$m$}--(8.25,-2.25)--(8.25,0)node[above]{$n$};
\draw(6,-2.25)--(6,-1.75)--(6.5,-1.75)--(6.5,-1)--(7.5,-1)--(7.5,0);
\draw(6,-0.75)node{$Y_{\lambda}$};
\draw(7.25,-1.75)node{\rotatebox{180}{$Y_{\lambda}^{D}$}};
\end{tikzpicture}
\end{center}

Let $\binom{[1,m+n]}{m}$ denote the set of all subsets of $[1,m+n]$ having $m$ elements. For $\lala \in \mcY_{m}(m+n)$, we set $i_{t}' \coloneqq \lambda_{m-t+1} + t$ for $1 \leq t \leq m$; observe that $I_{\lambda} \coloneqq \{i_{1}' < \cdots < i_{m}'\} \in \binom{[1,m+n]}{m}$. It is well-known that the map $\lambda \mapsto I_{\lambda}$ is a bijection from $\mcY_{m}(m+n)$ onto $\binom{[1,m+n]}{m}$. By the composition of this bijection and the inverse of the bijection $\mcY_{m}(m+n) \to \mcF(Y_{m,n})$, $\lambda \mapsto Y_{\lambda}$, we obtain a bijection $I$ from $\mcF(Y_{m,n})$ onto $\binom{[1,m+n]}{m}$. Let $Y \in \mcF(Y_{m,n})$. For $(i,j) \in Y$, we set $H_{Y}(i,j) \coloneqq \{(i,j)\} \cup \{(i,j') \in Y \mid j < j'\} \cup \{(i',j) \in Y \mid i < i'\}$, and call it the {\it hook at $(i,j)$} in $Y$. Also, for $(i,j) \in Y$, we set
\begin{align*}
Y \la i,j \ra \coloneqq \{(i', j') &\mid (i',j') \in Y,\ \text{and}\ i' < i\ \text{or}\ j' < j\} \\
&\cup \{(i'-1,j'-1) \mid (i',j') \in Y,\ i' > i\ \text{and}\ j' > j\}.
\end{align*}
The procedure which obtains $Y\la i,j \ra$ from $Y$ is called {\it removing the hook} at $(i,j)$ from $Y$ (see Figure~\ref{fig} below).

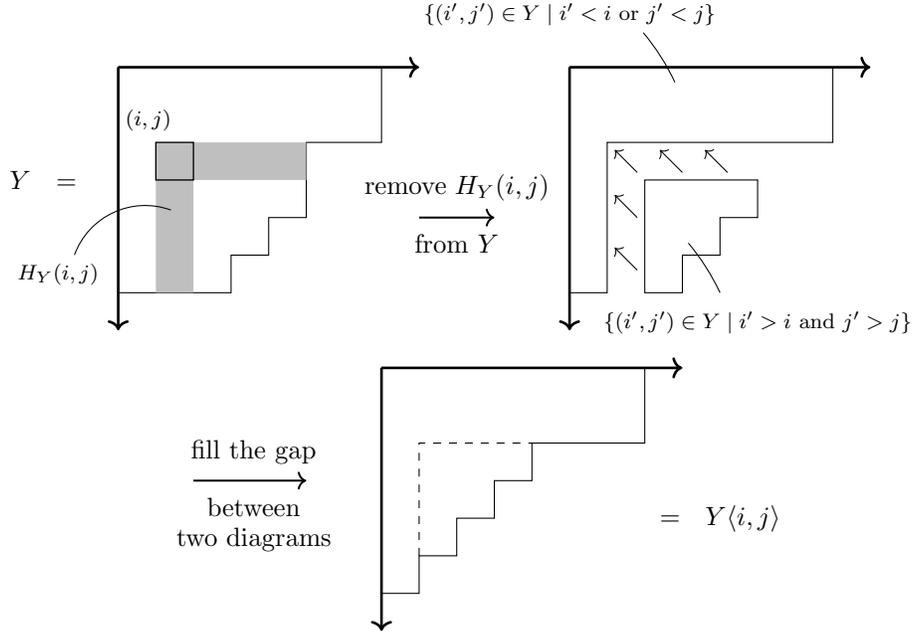
\begin{figure}[htbp]
\centering
\begin{tikzpicture}
\draw(-1,1.5)node{$Y \ \ =$};
\draw(0,0)--(1.5,0)--(1.5,0.5)--(2,0.5)--(2,1)--(2.5,1)--(2.5,2)--(3.5,2)--(3.5,3)--(0,3)--cycle;
\draw[line width = 1pt,->](0,3)--(4,3);
\draw[line width = 1pt,->](0,3)--(0,-0.5);
\draw(0.4,2.3)node{{\footnotesize $(i,j)$}};
\fill[lightgray](0.5,2)--(0.5,0)--(1,0)--(1,1.5)--(2.5,1.5)--(2.5,2)--cycle;
\draw[line width = 0.5pt](0.5,1.5)--(1,1.5)--(1,2)--(0.5,2)--cycle;
\draw(0.7,1.1) arc (70 : 160 : 1);
\draw(-0.8,0.25)node{{\footnotesize $H_{Y}(i,j)$}};

\draw[line width = 0.75pt,->](4,1)--(5,1);
\draw(4.5,1.1)node[above]{remove $H_{Y}(i,j)$};
\draw(4.5,0.9)node[below]{from $Y$};

\draw(6,0)--(6.5,0)--(6.5,2)--(9.5,2)--(9.5,3)--(6,3)--cycle;
\draw(7,0)--(7.5,0)--(7.5,0.5)--(8,0.5)--(8,1)--(8.5,1)--(8.5,1.5)--(7,1.5)--cycle;
\draw[->](6.9,0.3)--(6.6,0.6);
\draw[->](6.9,1)--(6.6,1.3);
\draw[->](7.5,1.6)--(7.2,1.9);
\draw[->](8.1,1.6)--(7.8,1.9);
\draw[->](6.9,1.6)--(6.6,1.9);

\draw[line width = 1pt,->](6,3)--(10,3);
\draw[line width = 1pt,->](6,3)--(6,-0.5);
\draw(6,3.7)node{{\footnotesize $\{(i',j') \in Y \mid i' < i\ \text{or}\ j' < j\}$}};
\draw(7.4,2.8) arc (25 : 40 : 3);
\draw(8.5,-0.4)node{{\footnotesize $\{(i',j') \in Y \mid i' > i\ \text{and}\ j' > j\}$}};
\draw(8,0) arc (25 : 40 : 3);

\draw[line width = 0.75pt,->](1,-2.5)--(2.5,-2.5);
\draw(1.8,-2.4)node[above]{fill the gap};
\draw(1.8,-2.6)node[below]{between};
\draw(1.8,-3)node[below]{two diagrams};
\draw(3.5,-4)--(4,-4)--(4,-3.5)--(4.5,-3.5)--(4.5,-3)--(5,-3)--(5,-2.5)--(5.5,-2.5)--(5.5,-2)--(7,-2)--(7,-1)--(3.5,-1)--cycle;
\draw[dashed](4,-3.5)--(4,-2)--(5.5,-2);
\draw(8,-3)node{$=\ \ Y \la i,j \ra$};
\draw[line width = 1pt,->](3.5,-1)--(7.5,-1);
\draw[line width = 1pt,->](3.5,-1)--(3.5,-4.5);
\end{tikzpicture}
\caption{Removing the hook at $(i,j)$ from $Y$.} \label{fig}
\end{figure}

\subsection{Combinatrial game theory.}\label{subsec22}
For the general theory of combinatorial games, we refer the reader to {\cite[Chapters 1 and 2]{Si}}. In this subsection, we fix an impartial game in normal play whose game positions are all short (in the sense of {\cite[pages 4 and 9]{Si}}).
\begin{defi}\label{outcome}
A game position of an impartial game is called an {\it $\mcN$-position} (resp., a {\it $\mcP$-position}) if the next player (resp., the previous player) has a winning strategy.
\end{defi}
\begin{defi}
For a (proper) subset $X$ of $\BZ_{\geq 0}$, we set $\text{mex}\, X \coloneqq \text{min}\, (\BZ_{\geq 0} \bs X)$.
\end{defi}
For a game position $G$ of an impartial game, we denote by $\mcOH(G)$ the set of all options of  $G$.
\begin{defi}
Let $G$ be a game position. The {\it Grundy value} $\mcG(G)$ of $G$ is defined by
\[
\mcG(G) \coloneqq
\begin{cases}
0 &\text{if}\ G\ \text{is an ending position}, \\
\text{mex}\, \{\mcG(P) \mid P \in \mcOH(G)\} &\text{if}\ G\ \text{is not an ending position}.
\end{cases}
\]
\end{defi}
Recall from {\cite[page 6]{Si}} that each game position of an impartial game is either an $\mcN$-position or a $\mcP$-position.
The following result is well-known in the combinatorial game theory.
\begin{thm}[{\cite[Theorem 2.1]{Si}}]
A game position $G$ is a $\mcP$-position if and only if $\mcG(G) = 0$.
\end{thm}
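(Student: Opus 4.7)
The plan is to prove the theorem by well-founded induction on the game tree, which is finite by the shortness assumption. More concretely, since every play terminates, I can induct on $\ell(G)$, the length of the longest sequence of moves from $G$; for ending positions $\ell(G) = 0$, and for non-ending $G$ every option $P \in \mcOH(G)$ satisfies $\ell(P) < \ell(G)$, so strong induction is legitimate.

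For the base case, if $G$ is an ending position, then $\mcG(G) = 0$ by definition, and $G$ is a $\mcP$-position because the player just about to move has no options and hence (since the one who produced $\emptyset$ already won on the preceding move) is the loser; equivalently, the previous player has trivially won.

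For the inductive step, assume the equivalence for every option of a non-ending position $G$. I split into two cases according to $\mcG(G)$. First, suppose $\mcG(G) = 0$. By definition of $\mathrm{mex}$, no $P \in \mcOH(G)$ has $\mcG(P) = 0$, so by the inductive hypothesis every option is an $\mcN$-position. Whichever option the next player selects, they hand their opponent a winning $\mcN$-position (from the opponent's perspective as the new ``next player''), so the previous player of $G$ has a winning strategy, i.e., $G$ is a $\mcP$-position. Conversely, suppose $\mcG(G) \neq 0$. By definition of $\mathrm{mex}$, there exists $P \in \mcOH(G)$ with $\mcG(P) = 0$, which is a $\mcP$-position by the inductive hypothesis. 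The next player moves $G \to P$ and thereby assumes the role of ``previous player'' at $P$, winning by the inductive hypothesis; hence $G$ is an $\mcN$-position.

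There is no serious obstacle here: the whole argument is a textbook double implication driven by the defining recursion of $\mcG$ and the definition of $\mathrm{mex}$. The only subtle point worth stating explicitly is that shortness of game positions guarantees the well-foundedness needed for the induction, and that being a $\mcP$- or $\mcN$-position is a dichotomy (as recalled from \cite[page~6]{Si}), so ``not an $\mcN$-position'' can safely be used to conclude ``$\mcP$-position'' in the first case above.
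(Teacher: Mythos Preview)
Your argument is correct and is precisely the standard proof of this classical fact. Note, however, that the paper does not give its own proof of this theorem at all: it is quoted from \cite[Theorem~2.1]{Si} and stated without proof as background material, so there is no ``paper's proof'' to compare against. Your write-up could serve as a self-contained replacement for the citation, but the paper evidently intends the result to be taken as known.
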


\section{Unimodal numbering on Young diagrams.}\label{sec3}

Let $Y \in \mcF(Y_{m,n})$. For each box $(i,j) \in Y$, we write $c\,(i,j) \coloneqq \text{min}\, (j-i+m, i-j+n)$ on it; we call this numbering on $Y$ the {\it unimodal numbering} on $Y$.

\begin{exam}\label{EMN}
Assume that $m = 3$ and $n = 5$. The Young diagram $Y = Y_{(4 , 4 , 2)} \in \mcF(Y_{3,5})$ with the unimodal numbering is as follows:
\begin{center}
\begin{tikzpicture}
\draw[line width=1pt,->] (0,0)--(3,0)node[right]{$j$};
\draw[line width=1pt,->] (0,0)--(0,-2)node[below]{$i$};
\draw(0,-0.5)--(2,-0.5);
\draw(0,-1)--(2,-1);
\draw(0,-1.5)--(1,-1.5);
\draw(0.5,0)--(0.5,-1.5);
\draw(1,0)--(1,-1.5);
\draw(1.5,0)--(1.5,-1);
\draw(2,0)--(2,-1);
\draw(0.25,-0.25)node{$3$};
\draw(0.75,-0.25)node{$4$};
\draw(1.25,-0.25)node{$3$};
\draw(1.75,-0.25)node{$2$};
\draw(0.25,-0.75)node{$2$};
\draw(0.75,-0.75)node{$3$};
\draw(1.25,-0.75)node{$4$};
\draw(1.75,-0.75)node{$3$};
\draw(0.25,-1.25)node{$1$};
\draw(0.75,-1.25)node{$2$};
\end{tikzpicture}
\end{center}
\end{exam}

It can be easily checked that $c \coloneqq (m+n-1+\chi)\, /\, 2$ is the maximum number appearing in the unimodal numbering, where
\[
\chi \coloneqq
\begin{cases}
1 &\text{if}\ m+n \in 2\BN, \\
0 &\text{if}\ m+n \in 2\BN+1.
\end{cases}
\]
We define $\mathbb{D}_{m,n} \subset \mathbb{Z}_{\geq 0}^{m+n+1}$ by
\[
\begin{split}
\mathbb{D}_{m,n} \coloneqq \{(a_{1}, a_{2}, a_{3}, \ldots,\ &a_{m+n-1}, a_{m+n}, a_{m+n+1}) \in \mathbb{Z}^{m+n+1}_{\geq 0} \mid \\
a_{1} = a_{m+n+1} &= 0\, ,\, 0 \leq\ a_{k} - a_{k-1} \leq 1\ \text{for}\ 2 \leq k \leq m+1, \\
&0 \leq a_{k} - a_{k+1} \leq 1\ \text{for}\ m+1 \leq k \leq m+n
\}.
\end{split}
\]
For $Y \in \mathcal{F}(Y_{m,n})$, we set $d_{k} = d_{k}(Y) \coloneqq \# \{(i, j) \in Y \mid j - i = -m-1+k\}$ for each $1 \leq k \leq m+n+1$; note that $d_{1} = d_{m+n+1} = 0$. We know from \cite[Proposition~3.6]{AT} that
\[
D_{m,n}(Y) \coloneqq (d_{1}, d_{2}, d_{3}, \ldots , d_{m+n-1}, d_{m+n}, d_{m+n+1})
\]
is an element of $\BD_{m,n}$. Thus we obtain the map $D_{m,n} \colon \mcF(Y_{m,n}) \to \BD_{m,n}$, $Y \mapsto D_{m,n}(Y)$.
An element $D_{m,n}(Y) \in \BD_{m,n}$ is called the {\it diagonal expression} of $Y$. For simplicity of notation, we denote $D_{m,n}$ by $D$.

\begin{exam}\label{EDR}
Assume that $m = 3$ and $n = 5$. Let $\lambda = (4 , 3 , 1) \in \mcY_{3}(8)$. Then we have $D_{3,5}(Y_{\lambda}) = (0, 1, 1, 2, 2, 1, 1, 0, 0) \in \BD_{3,5}$.
\end{exam}

\begin{prop}[{\cite[Proposition~3.6]{AT}}]\label{Dbij}
The map $D_{m,n} \colon \mcF(Y_{m,n}) \to \BD_{m,n}$ is bijective.
\end{prop}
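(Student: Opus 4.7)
The plan is to construct an explicit inverse $E \colon \BD_{m,n} \to \mcF(Y_{m,n})$ to $D = D_{m,n}$. The key structural fact I would establish first is that for any $Y_{\lambda} \in \mcF(Y_{m,n})$ and any $c$ with $1 - m \leq c \leq n - 1$, the boxes of $Y_{\lambda}$ on the diagonal $j - i = c$ always occupy an initial segment of the full diagonal of $Y_{m,n}$, measured inward from the edge of the rectangle. This holds because the sequence $(\lambda_{i} - i)_{i=1}^{m}$ is strictly decreasing, so the condition $\lambda_{i} \geq i + c$ defining membership cuts out an initial segment of the row indices $i$ (with a short check for the side condition $1 \leq i + c \leq n$). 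One may repackage this uniformly as
\[
(i, j) \in Y_{\lambda} \iff \min(i, j) \leq d_{j - i + m + 1}(Y_{\lambda}).
\]

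Motivated by this criterion, for $a = (a_{1}, \ldots, a_{m+n+1}) \in \BD_{m,n}$ I would define
\[
E(a) \coloneqq \bigl\{(i, j) \in Y_{m,n} \bigm| \min(i, j) \leq a_{j - i + m + 1}\bigr\}.
\]
The bulk of the work is to check that $E(a)$ is a Young diagram, i.e., that $(i, j) \in E(a)$ with $i \geq 2$ (resp.\ $j \geq 2$) forces $(i - 1, j) \in E(a)$ (resp.\ $(i, j - 1) \in E(a)$). Writing $k = j - i + m + 1$, each such implication asserts that the inequality $\min(i, j) \leq a_{k}$ transfers to an analogous inequality at $k \pm 1$. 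A case split according to whether $j < i$, $j = i$, or $j > i$ places $k$ on the increasing side ($k \leq m + 1$) or the decreasing side ($k \geq m + 1$) of the unimodal sequence, and reduces the claim to one application of a defining $\BD_{m,n}$ inequality $0 \leq a_{k} - a_{k - 1} \leq 1$ or $0 \leq a_{k} - a_{k + 1} \leq 1$. One also needs the a priori bounds $a_{k} \leq \min(k - 1, m + n + 1 - k, m)$, which follow by induction from $a_{1} = a_{m + n + 1} = 0$ and the unit-step conditions, to guarantee that $E(a)$ stays inside $Y_{m,n}$.

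With $E(a) \in \mcF(Y_{m,n})$ established, $D \circ E = \mathrm{id}$ is immediate: by definition, on the diagonal $j - i = c$ the set $E(a)$ picks out the initial segment of length $a_{c + m + 1}$, so its diagonal count is exactly $a_{c + m + 1}$. The identity $E \circ D = \mathrm{id}$ is exactly the uniform criterion displayed above. The main obstacle I expect is the Young-diagram verification for $E(a)$: one must handle the boundary case $j = i$, where $k$ sits at the peak $m + 1$ and both the increasing and decreasing unit-step conditions of $\BD_{m,n}$ come into play within the same implication.
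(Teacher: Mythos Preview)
The paper does not give its own proof of this proposition; it is quoted verbatim from \cite[Proposition~3.6]{AT} and used as a black box. So there is nothing in the present paper to compare your argument against.

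That said, your plan is sound and would yield a complete proof. The pointwise criterion
\[
(i,j)\in Y_{\lambda}\ \Longleftrightarrow\ \min(i,j)\le d_{j-i+m+1}(Y_{\lambda})
\]
is correct, and your case analysis for the closure of $E(a)$ under passing to $(i-1,j)$ and $(i,j-1)$ goes through exactly as you outline, with the peak case $k=m+1$ handled by invoking one inequality from each side of the $\BD_{m,n}$ conditions. One small clarification: the a priori bounds $a_{k}\le\min(k-1,\,m+n+1-k,\,m)$ are not needed to keep $E(a)$ inside $Y_{m,n}$ (that is automatic from your definition of $E(a)$ as a subset of $Y_{m,n}$); rather, they are what guarantees $D\circ E=\mathrm{id}$, since the $k$-th diagonal of $Y_{m,n}$ has length exactly $\min(k-1,\,m+n+1-k,\,m)$ and you need $a_{k}$ not to exceed that length in order that the diagonal count of $E(a)$ return $a_{k}$ rather than the truncated value. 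With that adjustment your argument is complete.
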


Here we recall from \cite[Subsection~3.3]{AT} the relation between ``removing a hook'' (see Figure~\ref{fig}) and the diagonal expression (see Example~\ref{El,r} below). For a subset $S$ of $Y \in \mcF(Y_{m,n})$, we define $\mcH_{Y}(S)$ to be the multiset consisting of $c\,(i,j)$ for $(i,j) \in S$. The multiset $\mcH_{Y}(S)$ is called the {\it numbering multiset} for $S$. In particular, if $S = H_{Y}(i,j)$ for some $(i,j) \in Y$, then we denote $\mcH_{Y}(S)$ by $\mcH_{Y}(i,j)$. We deduce that $\mcH_{Y}(Y) = \mcH_{Y}(Y \la i,j \ra) \cup \mcH_{Y}(i,j)$ (the union of multisets). Now, let $Y \in \mcF(Y_{m,n})$, and fix $(i,j) \in Y$. Let $i'$ (resp., $j'$) be such that $(i',j) \in Y$ and $(i'+1,j) \notin Y$ (resp., $(i,j') \in Y$ and $(i,j'+1) \notin Y$).
\begin{center}
\begin{tikzpicture}
\draw[line width=1pt,->] (0,0)--(5,0)node[right]{$j$};
\draw[line width=1pt,->] (0,0)--(0,-3)node[below]{$i$};
\fill[lightgray](0.5,-2)--(1,-2)--(1,-2.5)--(0.5,-2.5)--cycle;
\fill[lightgray](4,-0.5)--(4.5,-0.5)--(4.5,-1)--(4,-1)--cycle;
\draw(0,0)--(0,-2.5)--(1.5,-2.5)--(1.5,-2)--(2.5,-2)--(2.5,-1.5)--(3.5,-1.5)--(3.5,-1)--(4.5,-1)--(4.5,0)--cycle;
\draw(0.75,-2.5)node[below]{$(i',j)$};
\draw(4.5,-0.75)node[right]{$(i,j')$};
\draw(0.5,-0.5)--(1,-0.5)--(1,-1)--(0.5,-1)--cycle;
\draw(1.5,-0.85)node[below]{$(i,j)$};
\draw(2.45,-0.75)node{$\cdots$ $\cdots$ $\cdots$};
\draw(0.75,-1.4)node{$\vdots$};
\end{tikzpicture}
\end{center}
Then we see that
\begin{align*}
&\ \ \# \{(x,y) \in Y \mid y-x=-m+k\} - \# \{(x,y) \in Y \la i,j \ra \mid y-x=-m+k\} \\
&=
\begin{cases}
1 &\text{if}\ m+j-i' \leq k \leq m+j'-i, \\
0 &\text{otherwise}.
\end{cases}
\end{align*}
Therefore, if
\begin{align*}
D(Y) = (d_{1}, \ldots , d_{m+j-i'}, \,&d_{m+j-i'+1}, d_{m+j-i'+2}, \ldots , \\
&d_{m+j'-i}, d_{m+j'-i+1}, d_{m+j'-i+2}, \ldots , d_{m+n+1}),
\end{align*}
then
\begin{align*}
D(Y\la i,j \ra) = (d_{1}, \ldots ,\, &d_{m+j-i'}, d_{m+j-i'+1}-1, d_{m+j-i'+2}-1 \ldots , \\
&d_{m+j'-i}-1, d_{m+j'-i+1}-1, d_{m+j'-i+2} \ldots , d_{m+n+1}).
\end{align*}
Thus, if we remove a hook from $Y \in \mcF(Y_{m,n})$, then $1$ is subtracted from some consecutive entries in $D(Y)$; in the case above, the consecutive entries are $d_{l}, d_{l+1}, \ldots, d_{r}$, with $l = m+j-i'+1$ and $r = m+j'-i+1$.
\begin{defi}\label{l,r}
Let $\bold{a} = (a_{1}, a_{2}, \ldots , a_{m+n}, a_{m+n+1}) \in \BD_{m,n}$; recall that $a_{1} = a_{m+n+1} = 0$. For $2 \leq l \leq r \leq m+n$, we write $\bold{a} \xra{l,r} \bold{a}'$ if $a_{k} \geq 1$ for all $l \leq k \leq r$, and $\bold{a}' = (a_{1}, a_{2}, \ldots, a_{l-1}, a_{l}-1, a_{l+1}-1, \ldots , a_{r-1}-1, a_{r}-1, a_{r+1}, \ldots , a_{m+n}, a_{m+n+1}) \in \BZ_{\geq 0}^{m+n+1}$.
\end{defi}
Recall that the map $D = D_{m,n} \colon \mcF(Y_{m,n}) \to \BD_{m,n}$ is bijective. Let $\bold{a}, \bold{a}' \in \BD_{m,n}$, and set $Y \coloneqq D^{-1}(\bold{a}),\, Y' \coloneqq D^{-1}(\bold{a}')$. If $\bold{a} \xra{l,r} \bold{a}'$ for some $2 \leq l \leq r \leq m+n$, then we write $Y \xra{l,r} Y'$.

\begin{exam}\label{El,r}
Keep the notation and setting in Example~\ref{EDR}. It follows that $Y_{\lambda} \la 2,1 \ra = \{(1,1), (1,2), (1,3), (1,4)\}$, and hence $D(Y_{\lambda} \la 2,1 \ra) = (0, 0, 0, 1, 1, 1, 1, 0, 0)$. Thus we have $D(Y_{\lambda}) \xra{2,5} D(Y_{\lambda}\la 2,1 \ra)$ (and hence $Y_{\lambda} \xra{2,5} Y_{\lambda}\la 2,1 \ra$).
\end{exam}

\section{Multiple Hook Removing Game.}\label{sec4}

Abuku and Tada \cite{AT} introduced an impartial game, named Multiple Hook Removing Game (MHRG for short), whose rule is given as follows; recall that $m$ and $n$ are fixed positive integers such that $m \leq n$:
\begin{enumerate}[(1)]
\item All game positions are some Young diagrams contained in $\mcF(Y_{m,n})$ with the unimodal numbering. The starting position is the rectangular Young diagram $Y_{m,n}$.
\item Assume that $Y \in \mcF(Y_{m,n})$ appears as a game position. If $Y \neq \emptyset$ (the empty Young diagram), then a player chooses a box $(i,j) \in Y$, and remove the hook at $(i,j)$ in $Y$; recall from Subsection~\ref{subsec21} that the resulting Young diagram is $Y\la i,j \ra$. Then we know from \cite[Lemma~3.15]{AT} (see also Lemma~\ref{at most} below) that $f \coloneqq \# \{(i',j') \in Y\la i,j \ra \mid \mcH_{Y\la i,j \ra}(i',j') = \mcH_{Y}(i,j)\ \text{(as multisets)}\} \leq 1$. If $f = 0$, then a player moves $Y$ to $Y\la i,j \ra \in \mcOH(Y)$; we call this case and this operation (MHR 1). If $f = 1$, then a player moves $Y$ to $(Y\la i,j \ra)\la i',j' \ra \in \mcOH(Y)$, where $(i', j') \in Y\la i,j \ra$ is the unique element such that $\mcH_{Y\la i,j \ra}(i', j') = \mcH_{Y}(i,j)$; we call this case and this operation (MHR 2).
\item The (unique) ending position is the empty Young diagram $\emptyset$. The winner is the player who makes $\emptyset$ after his/her operation (2).
\end{enumerate}

\begin{defi}\label{state}
We denote by $\mcS(Y_{m,n})$ the set of all those Young diagrams in $\mcF(Y_{m,n})$ which appear as game positions of MHRG (with $Y_{m,n}$ the starting position); in general, $\mcS(Y_{m,n}) \subsetneq \mcF(Y_{m,n})$ as Example~\ref{Emotiv} below shows.
\end{defi}

\begin{defi}\label{MHR}
Let $Y \in \mcS(Y_{m,n})$, and $Y' \in \mcOH(Y)$. If a player moves $Y$ to $Y'$ by operation (MHR 1) (resp., (MHR 2)), then we write $Y \xra{\text{(MHR 1)}} Y'$ (resp., $Y \xra{\text{(MHR 2)}} Y'$).
\end{defi}

\begin{exam}\label{Emotiv}
Assume that $m = 2$ and $n = 3$. The elements of $\mcS(Y_{2,3})$ are
\begin{center}
\begin{tikzpicture}
\draw(0,0)node{$\young(221,122)$};
\draw(-1.3,-1.9)node{$\young(221,1)$};
\draw[->](-0.5,-0.7)--(-1,-1.2);
\draw(0.6,-3.4)node{$\young(22)$};
\draw[->](-0.8,-2.4)--(0,-3);
\draw(-2.2,-1)node[above]{{\scriptsize (MHR 1) or (MHR 2)}};
\draw(-1.1,-2.9)node{{\scriptsize (MHR 2)}};
\draw(2.4,-4.7)node{$\emptyset$};
\draw[->](1.2,-3.8)--(2,-4.4);
\draw(0,-4.2)node{{\scriptsize (MHR 1) or (MHR 2)}};
\draw[->](-2.1,-2.4) to [out=-140, in=-180, looseness = 1.8] (2,-4.7);
\draw(-3.9,-3.7)node{{\scriptsize (MHR 1)}};
\draw(-3.9,-3.8)node[below]{{\scriptsize or (MHR 2)}};
\draw[->](0.2,-0.7)--(0.5,-3);
\draw(1.5,-1.5)node{{\scriptsize (MHR 1)}};
\draw(1.5,-1.6)node[below]{{\scriptsize or (MHR 2)}};
\draw[->](1,0) to [out = 0, in = 45, looseness = 1] (2.6,-4.4);
\draw(4,-2)node{{\scriptsize (MHR 2)}};
\end{tikzpicture}
\end{center}
The following elements of $\mcF(Y_{2,3})$ are not contained in $\mcS(Y_{2,3})$:
\[
\Yvcentermath1 \young(221,12)\ ,\quad \young(22,12)\ ,\quad \young(22,1)\ ,\quad \young(221)\ ,\quad \young(2,1)\ ,\quad \young(2)\ .
\]
\end{exam}

\begin{lemm}[{\cite[Lemma~3.15]{AT}}]\label{at most}
Let $Y \in \mcF(Y_{m,n})$, and $(i,j) \in Y$. Assume that there exists a box $(i',j') \in Y \la i,j \ra$ such that $\mcH_{Y \la i,j \ra}(i',j') = \mcH_{Y}(i,j)$ (as multisets). If $Y \xra{l,r} Y \la i,j \ra$, then $Y \la i,j \ra \xra{\ol{r-1},\ol{l-1}} (Y \la i,j \ra) \la i',j' \ra$. In particular, $\# \{(i',j') \in Y\la i,j \ra \mid \mcH_{Y\la i,j \ra}(i',j') = \mcH_{Y}(i,j)\ (\text{as multisets})\} \leq 1$.
\end{lemm}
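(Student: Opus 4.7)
My plan is to exploit the reflective symmetry of the unimodal numbering under the involution $k \mapsto m+n+2-k$. Since $c(i,j)=\min(j-i+m,\,i-j+n)$ depends only on the diagonal $j-i$, a box on diagonal index $k=(j-i)+m+1$ carries the label $f(k) := \min(k-1,\ol{k})$, which satisfies $f(k)=f(m+n+2-k)$. Moreover $f$ increases strictly on $[1,c+1]$ and decreases strictly on $[c+1,m+n+1]$ when $m+n$ is even, and has a two-element plateau $\{c+1,c+2\}$ at its maximum when $m+n$ is odd. Because the hook $H_Y(i,j)$ meets each diagonal of its range $[l,r]$ in exactly one box (where $Y \xra{l,r} Y\la i,j\ra$), the numbering multiset equals $\mcH_Y(i,j) = \{\,f(k) \mid l \leq k \leq r\,\}$.

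The first sub-claim I need is: for intervals $[l_1,r_1], [l_2,r_2] \subseteq [2,m+n]$, the multiset equality $\{f(k) \mid l_1 \leq k \leq r_1\} = \{f(k) \mid l_2 \leq k \leq r_2\}$ forces $[l_2,r_2]=[l_1,r_1]$ or $[l_2,r_2]=[\ol{r_1-1},\ol{l_1-1}]$. I would establish this by case analysis on whether each interval lies in the strictly increasing part of $f$, in the strictly decreasing part, or straddles the peak. In the monotone cases the multiset is a block of consecutive integers that determines the endpoints up to the reflection $k \mapsto m+n+2-k$; in the straddling case the multiset records two descending tails from the peak value, whose unordered pair of lengths recovers precisely the two mirror-image candidates. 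The parities of $m+n$ need to be tracked separately.

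The second ingredient is a description of hook removal via the encoding $I(Y) \subseteq [1,m+n]$ from Subsection~\ref{subsec21}: the operation $Y \xra{l,r} Y\la i,j\ra$ amounts to the swap $I(Y\la i,j\ra) = (I(Y) \setminus \{r\}) \cup \{l-1\}$, which is valid because $r \in I(Y)$ and $l-1 \notin I(Y)$. I would verify this by relating $D(Y)$ to $I(Y)$ (reading the latter off the positions where the slope of $D(Y)$ picks up an up-step to the left of $k=m+1$ and drops a down-step to the right) and checking the effect of subtracting $1$ on diagonals $l,\ldots,r$. With this in hand, the alternative $[l',r']=[l,r]$ is immediately ruled out: after the first removal we have $l-1 \in I(Y\la i,j\ra)$ and $r \notin I(Y\la i,j\ra)$, the opposite of what a second swap at positions $(l-1,r)$ would need.

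Putting the pieces together, writing $Y\la i,j\ra \xra{l',r'} (Y\la i,j\ra)\la i',j'\ra$, the multiset hypothesis and the first sub-claim give $[l',r'] \in \{[l,r],\ [\ol{r-1},\ol{l-1}]\}$, and the second sub-claim excludes $[l,r]$; hence $[l',r']=[\ol{r-1},\ol{l-1}]$ as required. Uniqueness of $(i',j')$ (and so $f \leq 1$) then follows at once, since by the second sub-claim the pair $(l',r')$ determines $I((Y\la i,j\ra)\la i',j'\ra)$ and hence the Young diagram $(Y\la i,j\ra)\la i',j'\ra$, from which the removed hook and its corner $(i',j')$ are recovered. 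The main obstacle I anticipate is the case analysis in the first sub-claim near the peak of $f$, especially in the odd-$m+n$ case where the plateau introduces extra subtleties in the multiplicity counts.
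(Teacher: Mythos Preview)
The paper does not supply its own proof of this lemma: it is quoted from \cite[Lemma~3.15]{AT} and used as a black box, so there is no argument in this text against which to compare yours.

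That said, your outline is correct and would go through. The reduction to $\mcH_Y(i,j)=\{f(k)\mid l\le k\le r\}$ with $f(k)=\min(k-1,\,m+n+1-k)$ symmetric under $k\mapsto m+n+2-k$ is the natural one, and your ``second ingredient'' is exactly Lemma~\ref{sub}~(\ref{sub1}) of the present paper (stated there without proof as ``easily shown''). The exclusion of the alternative $[l',r']=[l,r]$ via $l-1\in I(Y\la i,j\ra)$ is clean; note that when $[l,r]$ is its own reflection (i.e.\ $l+r=m+n+2$) this simply means no such $(i',j')$ exists, so the hypothesis is vacuous. For the ``in particular'' clause you can cite the uniqueness assertion in Lemma~\ref{sub}~(\ref{sub1}) directly rather than passing through the image diagram: once $(l',r')$ is forced, the box $(i',j')$ is pinned down. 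Finally, the case analysis you flag for sub-claim~1 is routine once you observe that the multiset $\{f(k)\mid l\le k\le r\}$ records the maximum value together with the unordered pair of tail lengths on either side of the peak of $f$; the two orderings of that pair give exactly $[l,r]$ and its reflection, and the odd-$(m{+}n)$ plateau at $\{c+1,c+2\}$ merely shifts the peak from one index to two adjacent indices without creating any further ambiguity.
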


\begin{rema}\label{unique}
In fact, the following holds (see \cite[Lemma~3.15]{AT}), although we do not use these facts in this paper.
\begin{enumerate}[(1)]
\item Keep the notation and setting in Lemma~\ref{at most}. There does not exist $(i'', j'') \in (Y\la i,j \ra)\la i',j' \ra$ such that $\mcH_{(Y\la i,j \ra)\la i',j' \ra}(i'', j'') = \mcH_{Y}(i,j)$.
\item Let $(i,j), (k,l) \in Y$. Assume that $\mcH_{Y}(i,j) = \mcH_{Y}(k,l)$. If there exists a box $(i',j') \in Y\la i,j \ra$ such that $\mcH_{Y\la i,j \ra}(i',j') = \mcH_{Y}(i,j)$, then there exists a (unique) box $(k',l') \in Y\la k,l \ra$ such that $\mcH_{Y\la k,l \ra}(k',l') = \mcH_{Y}(i,j)$. Moreover, in this case, we have $(Y\la i,j \ra)\la i',j' \ra = (Y\la k,l \ra)\la k',l' \ra$.
\end{enumerate}
\end{rema}

\section{Description of $\mcS(Y_{m,n})$.}\label{sec5}

Recall that $m,n \in \BN$ are such that $m \leq n$, and that $c = \text{max}\, \{c\,(i,j) \mid (i,j) \in Y_{m,n}\}$ is equal to $(m+n-1+\chi)\, /\, 2$, where $\chi = 0$ (resp., $\chi = 1$) if $m+n$ is odd (resp., even). Also, we have a canonical bijection $I \colon \mcF(Y_{m,n}) \to \binom{[1,m+n]}{m}$ (see Subsection~\ref{subsec21}).

Let $Y \in \mcF(Y_{m,n})$. We set $I_{R}(Y) \coloneqq I(Y) \cap [c+1-\chi, m+n]$; note that $\ol{c+1-\chi} = m+n+1-(c+1-\chi) = c+1 \geq c+1-\chi$.
\begin{thm}\label{main}
Let $Y \in \mcF(Y_{m,n})$, and $\lala$ the partition corresponding to $Y$, that is, $Y = Y_{\lambda}$. The following (I), (I\hspace{-0.2mm}I), (I\hspace{-0.2mm}I\hspace{-0.2mm}I), and (I\hspace{-0.2mm}V) are equivalent. \\
(I) $Y \in \mcS(Y_{m,n})$. \hspace{5mm} (I\hspace{-0.2mm}I) $Y^{D} \in \mcS(Y_{m,n})$. \hspace{5mm} (I\hspace{-0.2mm}I\hspace{-0.2mm}I) $I_{R}(Y) \cap I_{R}(Y^{D}) = \emptyset$. \linebreak[4](I\hspace{-0.2mm}V) $\lambda_{i}+\lambda_{j} \neq n-m+i+j-1$ for all $1 \leq i , j \leq m$.
\end{thm}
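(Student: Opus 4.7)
The four conditions naturally split into quick ``algebraic'' equivalences and the core game-theoretic claim. The equivalence (III) $\iff$ (IV) comes from unwinding the bijection $I$: explicitly $I(Y) = \{\lambda_s + m - s + 1 \mid 1 \le s \le m\}$ and $I(Y^D) = \{n + s - \lambda_s \mid 1 \le s \le m\}$, and the identity $\overline{\lambda_s + m - s + 1} = n + s - \lambda_s$ shows $I(Y^D) = \{\overline{x} : x \in I(Y)\}$. Therefore $I(Y) \cap I(Y^D) \neq \emptyset$ iff there exist $s, t$ with $\lambda_s + \lambda_t = n - m + s + t - 1$, which by symmetry in $s, t$ is the negation of (IV). Restricting to $I_R$ is harmless because every pair $\{x, \overline{x}\}$ (including the self-dual singleton $\{c+1\}$ when $\chi = 0$) meets $[c+1-\chi, m+n]$, so $I(Y) \cap I(Y^D) = \emptyset$ iff $I_R(Y) \cap I_R(Y^D) = \emptyset$. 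Moreover, (III) is manifestly invariant under $Y \leftrightarrow Y^D$ since $\overline{\cdot}$ is an involution, so (II) $\iff$ (III) will follow from (I) $\iff$ (III) applied to $Y^D$.

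The core task is therefore (I) $\iff$ (III). My key tool is the reformulation of MHRG moves as Welter-type moves on the token set $I(Y) \subseteq [1, m+n]$: one can check from the diagonal formulas of Section~\ref{sec3} that a hook removal at $(i,j) \in Y$ with diagonal range $[l,r]$ replaces $r \in I(Y)$ by $l - 1 \notin I(Y)$. Consequently an (MHR 1) move is a single Welter move $a \to a'$ with $a > a'$, $a \in I(Y)$, $a' \notin I(Y)$, while by Lemma~\ref{at most} an (MHR 2) move follows this up with the reflected Welter move $\overline{a'} \to \overline{a}$ inside $I(Y\la i,j\ra)$; the dichotomy is forced by whether this reflected move is itself a valid Welter move.

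For (I) $\Rightarrow$ (III), I would argue by induction on the length of a play from $Y_{m,n}$. The base case is immediate: $I(Y_{m,n}) = \{n+1,\ldots,n+m\}$ and $I(Y_{m,n}^D) = \{1,\ldots,m\}$ are disjoint since $m \le n$. For the inductive step, let $Y$ satisfy (III) (that is, $\overline{x} \notin I(Y)$ for all $x \in I(Y)$) and consider a move to $Y'$. If the move is (MHR 1), the only potentially new pair in $I(Y') = (I(Y) \setminus \{a\}) \cup \{a'\}$ would be $\{a', \overline{a'}\}$ with $\overline{a'} \in I(Y) \setminus \{a\}$; combined with (III) for $Y$, this would make the reflected Welter move valid in $I(Y\la i,j\ra)$, contradicting the fact that (MHR 1), not (MHR 2), was applied. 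If the move is (MHR 2), the net effect on $I$ is symmetric under reflection---either removing $\{a, \overline{a'}\}$ and adding $\{a', \overline{a}\}$, or, in the self-dual sub-case $\overline{a'} = a' = c+1$ possible only when $\chi = 0$, just replacing $a$ by $\overline{a}$---and a direct check using (III) for $Y$ shows that every newly added element has its reflection absent from $I(Y')$, so no pair is created.

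For (III) $\Rightarrow$ (I), I use downward induction on $|Y_{m,n}| - |Y|$. The base case $Y = Y_{m,n}$ is trivial. For $Y \ne Y_{m,n}$ satisfying (III), I construct a predecessor $Y^* \in \mcF(Y_{m,n})$ satisfying (III), with $|Y^*| > |Y|$ and $Y \in \mcOH(Y^*)$, in two cases. If there exists $a' \in I(Y)$ with $a' \le c$, take $Y^*$ with $I(Y^*) = (I(Y) \setminus \{a'\}) \cup \{\overline{a'}\}$; the only potentially new pair in $I(Y^*)$ is $\{a', \overline{a'}\}$, killed by the removal of $a'$. Otherwise $I(Y) \subseteq [c+1, m+n]$, so $I(Y)$ touches $m$ out of the $c$ available pairs, and $m < c$ must hold (else $I(Y)$ exhausts the upper halves, forcing $Y = Y_{m,n}$); an untouched pair $\{k, \overline{k}\}$ then exists, and I take $I(Y^*) = (I(Y) \setminus \{\min I(Y)\}) \cup \{\overline{k}\}$ for a suitable maximal $\overline{k}$. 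In each case the move $Y^* \to Y$ is automatically (MHR 1), since (III) for $Y$ forces $\overline{a'} \notin I(Y)$, which invalidates the reflected Welter move inside $I(Y)$. The main obstacles are the careful case analysis in the forward induction (especially the self-dual sub-case of (MHR 2)) together with the pair-counting argument behind the second case of the backward construction.
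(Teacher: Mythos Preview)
Your argument is correct and follows the paper's overall architecture: the same reformulation of hook removals as Welter moves on $I(Y)$ (the paper's Lemma~\ref{sub}), the same forward induction for (I) $\Rightarrow$ (III), and the same backward induction for (III) $\Rightarrow$ (I); your treatment of (III) $\Leftrightarrow$ (IV) via the observation that $\{x,\overline{x}\}$ always meets $[c+1-\chi,m+n]$ is exactly the content of the paper's final paragraph. The one genuine difference is in the construction of the predecessor for (III) $\Rightarrow$ (I). The paper picks an arbitrary $r\in[n+1,m+n]\setminus I(Y)$ and some $l-1\in I(Y)$ below it, and then in Proposition~\ref{hukugen} branches on whether $\overline{r}\in I(Y)$, producing an (MHR~1) predecessor in one case and an (MHR~2) predecessor in the other. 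You instead choose the move so that the destination $a'$ always satisfies $\overline{a'}\notin I(Y)$ (either $a'\le c$ with $a'\in I(Y)$, or $a'=\min I(Y)$ when $I(Y)\subseteq[c+1,m+n]$), which forces the reflected move to be unavailable and hence always lands in the (MHR~1) case. This is a tidy simplification: it removes the need for the (MHR~2) half of Proposition~\ref{hukugen}. Two small remarks: your phrase ``downward induction on $|Y_{m,n}|-|Y|$'' should read ``induction'' (your base case is where this quantity is zero); and in your second case you should make explicit that some untouched upper half exceeds $\min I(Y)$, which follows since any $r\in[n+1,m+n]\setminus I(Y)$ has $\overline{r}\le m\le c$ (hence its pair is untouched) while $\min I(Y)\le n$ whenever $Y\ne Y_{m,n}$.
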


The rest of this section is devoted to a proof of Theorem~\ref{main}. We can easily show the following lemma.
\begin{lemm}\label{sub}
\begin{enumerate}[(A)]
\item It holds that $I(Y^{D}) = \{\ol{i} = m+n+1-i \mid i \in I(Y)\} = \ol{I(Y)}$ for $Y \in \mcF(Y_{m,n})$. \label{sub2}
\item Let $Y \in \mcF(Y_{m,n})$, and let $l, r \in [2,m+n]$ such that $l \leq r$. Then, $l-1 \notin I(Y)$ and $r \in I(Y)$ if and only if there exists a (unique) box $(i,j) \in Y$ such that $Y \xra{l,r} Y\la i,j \ra$; in this case, $I(Y\la i,j \ra) = (I(Y) \bs \{r\}) \cup \{l-1\}$ and $I(Y\la i,j \ra^{D}) = (I(Y^{D}) \bs \{\ol{r}\}) \cup \{\ol{l-1}\}$. \label{sub1}\label{sub3}
\end{enumerate}
\end{lemm}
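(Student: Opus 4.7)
Part (A) follows by a direct computation. Using the definition $I_\lambda = \{\lambda_{m-t+1}+t : 1 \leq t \leq m\}$ and substituting $s = m-t+1$,
\[
\ol{I_\lambda} = \{(m+n+1) - (\lambda_{m-t+1}+t) : 1 \leq t \leq m\} = \{(n-\lambda_s)+s : 1 \leq s \leq m\}.
\]
Since $(\lambda^D)_{m-t+1} = n - \lambda_t$ by the definition of $\lambda^D$,
\[
I_{\lambda^D} = \{(\lambda^D)_{m-t+1}+t : 1 \leq t \leq m\} = \{(n-\lambda_t)+t : 1 \leq t \leq m\},
\]
and the two sets coincide.

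For part (B), I will think of $I(Y) = \{p_1 < \cdots < p_m\}$ with $p_t := \lambda_{m-t+1}+t$ as beads on an abacus, the bead $p_t$ encoding row $m-t+1$ of $\lambda$, with the conventions $p_0 := 0$ and $\lambda_{m+1} := 0$. The formulas $l = m+j-i'+1$ and $r = m+j'-i+1$ (where $j' = \lambda_i$ and $i'$ is the last row with $\lambda_{i'} \geq j$) recalled in Section~\ref{sec3} are the bridge between hooks and beads. For $(\Leftarrow)$, assuming $Y \xra{l,r} Y\la i,j\ra$, the formula gives $r = \lambda_i + (m-i+1) = p_{m-i+1} \in I(Y)$; and using $l-1 = m+j-i'$, the defining inequalities $\lambda_{i'+1} < j \leq \lambda_{i'}$ rearrange into $p_{m-i'} < l-1 < p_{m-i'+1}$, so $l-1 \notin I(Y)$. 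For $(\Rightarrow)$, given $l-1 \notin I(Y)$ and $r \in I(Y)$ with $l \leq r$, I define $i \in [1,m]$ by $r = p_{m-i+1}$ (unique since $p_1 < \cdots < p_m$), let $t_0$ be the smallest index with $p_{t_0} > l-1$, and set $i' := m-t_0+1$, $j := l-1-m+i'$. Reversing the manipulation above yields $1 \leq j \leq \lambda_{i'}$, and $l-1 < r$ forces $t_0 \leq m-i+1$, hence $i \leq i'$; so $(i,j) \in Y$ with $i'$ the bottom of its column, and $Y \xra{l,r} Y\la i,j\ra$ by Section~\ref{sec3}. Uniqueness of $(i,j)$ is immediate from the construction: $i$ is forced by the bead at $r$, and $j$ by the gap at $l-1$.

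Finally, to derive the formula $I(Y\la i,j\ra) = (I(Y) \setminus \{r\}) \cup \{l-1\}$, I describe the new partition $\lambda'$ of $Y\la i,j\ra$ row by row ($\lambda'_k = \lambda_k$ for $k < i$ or $k > i'$; $\lambda'_k = \lambda_{k+1}-1$ for $i \leq k < i'$; $\lambda'_{i'} = j-1$) and compute $p'_t := \lambda'_{m-t+1}+t$ by cases on where $m-t+1$ lies relative to $\{i, i'\}$. The outcome is a cascading shift: $p'_{t_0} = l-1$, $p'_t = p_{t-1}$ for $t_0 < t \leq m-i+1$, and $p'_t = p_t$ otherwise. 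Taking the union over $t$, the only net change is the replacement of $r = p_{m-i+1}$ by $l-1$, giving the desired formula. The dual formula $I(Y\la i,j\ra^D) = (I(Y^D) \setminus \{\ol{r}\}) \cup \{\ol{l-1}\}$ then follows from (A) applied to $Y\la i,j\ra$. The main obstacle is this cascade computation: the algebra itself is routine, but one must be careful at the boundary cases $t_0 = 1$, $i = i'$, and $i' = m$, where the conventions $p_0 = 0$ and $\lambda_{m+1} = 0$ are used.
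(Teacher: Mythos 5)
Your proof is correct. The paper offers no proof of this lemma (it is dismissed with ``We can easily show the following lemma''), and your argument---translating the $l=m+j-i'+1$, $r=m+j'-i+1$ formulas of Section~\ref{sec3} into the bead set $\{\lambda_{m-t+1}+t\}$ and checking the cascade $p'_{t_0}=l-1$, $p'_t=p_{t-1}$ for $t_0<t\leq m-i+1$---is exactly the standard Maya-diagram computation the author is implicitly relying on, with the boundary conventions $p_0=0$ and $\lambda_{m+1}=0$ handled correctly.
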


\begin{rema}\label{MHR 2}
Let $Y \in \mcF(Y_{m,n})$, and $(i,j) \in Y$. Let $2 \leq l \leq r \leq m+n$ be such that $Y \xra{l,r} Y\la i,j \ra$. By Lemmas~\ref{at most} and \ref{sub}~(\ref{sub1}), it follows that $\ol{r} \notin I(Y\la i,j \ra)$ and $\ol{l-1} \in I(Y\la i,j \ra)$ if and only if there exists a (unique) box $(i',j') \in Y\la i,j \ra$ such that $Y\la i,j \ra \xra{\ol{r-1}, \ol{l-1}} (Y\la i,j \ra)\la i',j' \ra$; in particular, in this case, it holds that $\mcH_{Y\la i,j \ra}(i',j') = \mcH_{Y}(i,j)$ (as multisets).
\end{rema}

We first show (I) $\Rightarrow$ (I\hspace{-0.2mm}I\hspace{-0.2mm}I). Since $Y \in \mcS(Y_{m,n})$ by (I), there exists a sequence of game positions of the form
\begin{align*}\label{HRS}
Y_{m,n} = Y_{0} \xra{t_{1}} Y_{1} \xra{t_{2}} Y_{2} \xra{t_{3}} \cdots \xra{t_{p}} Y_{p} = Y,
\end{align*}
where $t_{i}$ is either (MHR 1) or (MHR 2) for each $1 \leq i \leq p$. For $1 \leq i \leq p$ such that $t_{i}$ is (MHR 2), we see from Lemmas~\ref{at most} and \ref{sub}~(\ref{sub1}) that $Y_{i-1} \xra{l_{i},r_{i}} Y_{i}' \xra{\ol{r_{i}-1},\ol{l_{i}-1}} Y_{i}$ for some $2 \leq l_{i} \leq r_{i} \leq m+n$ with $l_{i}-1 \notin I(Y_{i-1})$, $r_{i} \in I(Y_{i-1})$, and $Y_{i}' \in \mcF(Y_{m,n})$. Similarly, for $1 \leq i \leq p$ such that $t_{i}$ is (MHR 1), there exists $2 \leq l_{i} \leq r_{i} \leq m+n$ with $l_{i}-1 \notin I(Y_{i-1})$ and $r_{i} \in I(Y_{i-1})$ such that $Y_{i-1} \xra{l_{i},r_{i}} Y_{i}$; we set $Y_{i}' \coloneqq Y_{i}$ by convention.
We show by induction on $p$ that $I_{R}(Y_{p}) \cap I_{R}(Y_{p}^{D}) = \emptyset$. If $p = 0$, then it is obvious that $I_{R}(Y_{m,n}) \cap I_{R}(Y_{m,n}^{D}) = \emptyset$, since $I_{R}(Y_{m,n}) = \{n+1, n+2, \ldots , m+n\}$ and
\[
I_{R}(Y_{m,n}^{D}) = I_{R}(\emptyset) =
\begin{cases}
\emptyset &\text{if}\ m < n, \\
\{m\} &\text{if}\ m = n.
\end{cases}
\]
Assume that $p > 0$; by the induction hypothesis,
\begin{align}\label{IH}
I_{R}(Y_{p-1}) \cap I_{R}(Y_{p-1}^{D}) = \emptyset.
\end{align}
By Lemma~\ref{sub}~(\ref{sub1}), we have
\begin{align}
I_{R}(Y_{p}') \bs \{l_{p}-1\} &= I_{R}(Y_{p-1}) \bs \{r_{p}\}, \label{Y'} \\
I_{R}(Y_{p}'^{D}) \bs \{\ol{l_{p}-1}\} &= I_{R}(Y_{p-1}^{D}) \bs \{\ol{r_{p}}\}. \label{Y'D}
\end{align}

\begin{lemm}\label{neq condition}
It holds that $I_{R}(Y_{p}') \cap I_{R}(Y_{p}'^{D}) \neq \emptyset$ if and only if $\ol{l_{p}-1} \in I(Y_{p-1}) \bs \{r_{p}\}$ or $l_{p}-1 = \ol{l_{p}-1}$; notice that $l_{p}-1 = \ol{l_{p}-1}$ if and only if $\chi = 0$ and $l_{p}-1 = c+1$.
\end{lemm}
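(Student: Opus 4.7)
The plan is to exploit the induction hypothesis \eqref{IH} together with the two decomposition identities \eqref{Y'} and \eqref{Y'D}, reducing the statement to a careful set-theoretic bookkeeping. Write $x \coloneqq l_p - 1$ and $\bar x \coloneqq \overline{l_p-1}$, and set
\[
A \coloneqq I_R(Y_{p-1}) \setminus \{r_p\}, \qquad B \coloneqq I_R(Y_{p-1}^D) \setminus \{\overline{r_p}\}.
\]
By \eqref{IH}, $A \cap B = \emptyset$. From \eqref{Y'} and \eqref{Y'D} one reads off
\[
I_R(Y_p') = A \cup C, \qquad I_R(Y_p'^D) = B \cup D,
\]
where $C = \{x\}$ if $x \geq c+1-\chi$ and $C = \emptyset$ otherwise, and similarly $D = \{\bar x\}$ if $\bar x \geq c+1-\chi$ and $D = \emptyset$ otherwise (using that $x \in I(Y_p')$ and $\bar x \in I(Y_p'^D)$).

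Distributing the intersection and using $A \cap B = \emptyset$ gives
\[
I_R(Y_p') \cap I_R(Y_p'^D) = (A \cap D) \cup (C \cap B) \cup (C \cap D).
\]
I will now check each piece. First, $C \cap D \neq \emptyset$ iff $x = \bar x$ and $x \geq c+1-\chi$; but $x = \bar x$ forces $\chi = 0$ and $x = c+1 > c = c+1-\chi$, so this happens precisely when $x = \bar x$. Next, $A \cap D \neq \emptyset$ iff $\bar x \in I_R(Y_{p-1}) \setminus \{r_p\}$, and $C \cap B \neq \emptyset$ iff $x \in I_R(Y_{p-1}^D) \setminus \{\overline{r_p}\}$; translating the latter via Lemma~\ref{sub}(A), this reads $\bar x \in I(Y_{p-1}) \setminus \{r_p\}$ together with $x \geq c+1-\chi$.

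For the backward direction, if $x = \bar x$ the piece $C \cap D$ is nonempty. If $\bar x \in I(Y_{p-1}) \setminus \{r_p\}$ and $x \neq \bar x$, then at least one of $A \cap D$ or $C \cap B$ is nonempty; the key observation here is that $x + \bar x = m+n+1 \geq 2(c+1-\chi)$ (since $2(c+1-\chi) = m+n+1-\chi$), so at least one of $x, \bar x$ lies in $[c+1-\chi, m+n]$. The forward direction is the corresponding case analysis: a witness in $C \cap D$ gives $x = \bar x$, while a witness in $A \cap D$ or $C \cap B$ produces an element of $I(Y_{p-1}) \setminus \{r_p\}$ equal to $\bar x$.

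The only mild obstacle is the parity bookkeeping around the value $c+1-\chi$; one must notice that because $x$ and $\bar x$ sum to $m+n+1$, the $I_R$-cutoff in the definition never actually loses any element of the pair $\{x,\bar x\}$ needed for the equivalence, which is why the clean condition "$\bar x \in I(Y_{p-1}) \setminus \{r_p\}$" (rather than "$\bar x \in I_R(Y_{p-1}) \setminus \{r_p\}$") is correct.
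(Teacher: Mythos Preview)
Your argument is correct and follows essentially the same route as the paper: use the induction hypothesis \eqref{IH} together with \eqref{Y'}--\eqref{Y'D}, then split according to whether $l_p-1$ and $\overline{l_p-1}$ lie in $[c+1-\chi,m+n]$ (your sets $C$ and $D$ encode exactly the paper's case distinction). One small arithmetic slip: when $\chi=0$ you have $c+1-\chi=c+1$, not $c$, so in the $C\cap D$ case you get $x=c+1=c+1-\chi$ (equality, not strict inequality); since the cutoff condition is $x\geq c+1-\chi$, the conclusion is unaffected.
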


\begin{proof}
Assume first that $l_{p}-1 < c+1-\chi$; recall that $\ol{l_{p}-1} > \ol{c+1-\chi} = c+1 \geq c+1-\chi$. It follows from (\ref{Y'}) and (\ref{Y'D}) that
\[
I_{R}(Y_{p}') = I_{R}(Y_{p-1}) \bs \{r_{p}\}\, ,\quad I_{R}(Y_{p}'^{D}) = (I_{R}(Y_{p-1}^{D}) \bs \{\ol{r_{p}}\}) \cup \{\ol{l_{p}-1}\}.
\]
Because $I_{R}(Y_{p-1}) \cap I_{R}(Y_{p-1}^{D}) = \emptyset$ by the induction hypothesis, we see that $I_{R}(Y_{p}') \cap I_{R}(Y_{p}'^{D}) \neq \emptyset$ if and only if $\ol{l_{p}-1} \in I_{R}(Y_{p-1}) \bs \{r_{p}\}$.
Assume next that $l_{p}-1 \geq c+1-\chi$. It follows from (\ref{Y'}) and (\ref{Y'D}) that
\[
I_{R}(Y_{p}') = (I_{R}(Y_{p-1}) \bs \{r_{p}\}) \cup \{l_{p}-1\},
\]
\[
I_{R}(Y_{p}'^{D}) =
\begin{cases}
I_{R}(Y_{p-1}^{D}) \bs \{\ol{r_{p}}\}\ &\text{if}\ \ol{l_{p}-1} < c+1-\chi, \\
(I_{R}(Y_{p-1}^{D}) \bs \{\ol{r_{p}}\}) \cup \{\ol{l_{p}-1}\}\ &\text{if}\ \ol{l_{p}-1} \geq c+1-\chi.
\end{cases}
\]
Here we note that $\ol{l_{p}-1} \in I(Y_{p-1}) \bs \{r_{p}\}$ if and only if $l_{p}-1 \in I(Y_{p-1}^{D}) \bs \{\ol{r_{p}}\}$ by Lemma~\ref{sub} (\ref{sub2}).
If $\ol{l_{p}-1} < c+1-\chi$ (resp., $\ol{l_{p}-1} \geq c+1-\chi$), then it holds that $I_{R}(Y_{p}') \cap I_{R}(Y_{p}'^{D}) \neq \emptyset$ if and only if $\ol{l_{p}-1} \in I(Y_{p-1}) \bs \{r_{p}\}$ (resp., $\ol{l_{p}-1} \in I_{R}(Y_{p-1}) \bs \{r_{p}\}$ or $l_{p}-1 = \ol{l_{p}-1}$). Thus we have proved the lemma.
\end{proof}

\begin{prop}\label{prop12}
\begin{enumerate}[(1)]
\item The operation $t_{p}$ is (MHR 1) if and only if either of the following (a) or (b) holds. \label{tMHR 1}
\begin{enumerate}[(a)]
\item $\ol{l_{p}-1} \notin I(Y_{p-1})$ and $l_{p}-1 \neq \ol{l_{p}-1}$.
\item $l_{p}-1 = \ol{r_{p}}$ (notice that $l_{p}-1 \neq \ol{l_{p}-1}$ also in this case since $l_{p}-1 \neq r_{p} = \ol{l_{p}-1}$).
\end{enumerate}
\item The operation $t_{p}$ is (MHR 2) if and only if $\ol{l_{p}-1} \in I(Y_{p-1}) \bs \{r_{p}\}$ or $l_{p}-1=\ol{l_{p}-1}$. \label{tMHR 2}
\end{enumerate}
\end{prop}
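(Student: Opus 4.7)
Since $t_p$ is either (MHR 1) or (MHR 2), the assertions (1) and (2) are logically complementary, so I will prove (2) and deduce (1) by negating its disjunction. The negation of ``$\ol{l_p-1}\in I(Y_{p-1})\setminus\{r_p\}$ or $l_p-1=\ol{l_p-1}$'' splits into (a) ``$\ol{l_p-1}\notin I(Y_{p-1})$ and $l_p-1\neq\ol{l_p-1}$'' and (b) ``$\ol{l_p-1}=r_p$'' (equivalently $l_p-1=\ol{r_p}$); in case (b) the condition $l_p-1\neq\ol{l_p-1}$ is automatic since $r_p\geq l_p>l_p-1$.

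For (2), I would translate $t_p=$(MHR 2) into an index-theoretic condition on $I(Y_p')$. By Lemma~\ref{at most} together with Remark~\ref{MHR 2}, $t_p$ is (MHR 2) if and only if both $\ol{l_p-1}\in I(Y_p')$ and $\ol{r_p}\notin I(Y_p')$ hold, where $I(Y_p')=(I(Y_{p-1})\setminus\{r_p\})\cup\{l_p-1\}$ by Lemma~\ref{sub}(B). The first of these unpacks exactly as the disjunction appearing in (2), so the remaining content is to show that this disjunction automatically forces $\ol{r_p}\notin I(Y_p')$.

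I would establish this forcing by contradiction: assume both $\ol{l_p-1}\in I(Y_p')$ and $\ol{r_p}\in I(Y_p')$. Lemma~\ref{sub}(B) gives either $\ol{r_p}=l_p-1$ or $\ol{r_p}\in I(Y_{p-1})\setminus\{r_p\}$. The first subcase, $r_p=\ol{l_p-1}$, is incompatible with each alternative in the hypothesis on $\ol{l_p-1}$: $\ol{l_p-1}=l_p-1$ forces $r_p=l_p-1<l_p$, and $\ol{l_p-1}\in I(Y_{p-1})\setminus\{r_p\}$ forces $\ol{l_p-1}\neq r_p$. In the second subcase, $r_p$ and $\ol{r_p}$ are distinct elements of $I(Y_{p-1})$ summing to $m+n+1$, so $M=\max(r_p,\ol{r_p})>(m+n+1)/2$; a parity check using $c+1-\chi=\lceil(m+n)/2\rceil$ places $M$ in $I_R(Y_{p-1})$, and Lemma~\ref{sub}(A) with $\ol{M}\in I(Y_{p-1})$ puts $M\in I_R(Y_{p-1}^D)$, contradicting the inductive hypothesis~(\ref{IH}). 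The main delicate point will be the self-duality edge case $l_p-1=\ol{l_p-1}$, which occurs only when $m+n$ is odd and $l_p-1=c+1$ is the unique fixed point of $x\mapsto\ol{x}$; there the disjunction of (2) trivially holds, and one must invoke (\ref{IH}) to ensure $l_p-1\notin I(Y_{p-1})$ (otherwise the fixed point would itself lie in $I_R(Y_{p-1})\cap I_R(Y_{p-1}^D)$).
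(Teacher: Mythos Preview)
Your argument is correct and essentially identical to the paper's: both reduce to part (2), characterize (MHR 2) as the conjunction $\ol{l_p-1}\in I(Y_p')$ and $\ol{r_p}\notin I(Y_p')$, identify the first condition with the stated disjunction, and derive the second from the induction hypothesis~(\ref{IH}) (your $M=\max(r_p,\ol{r_p})$ device is just a repackaging of the paper's case split on whether $r_p\geq c+1-\chi$). One minor point: your final paragraph is superfluous, since $l_p-1\notin I(Y_{p-1})$ is already part of the setup via Lemma~\ref{sub}~(\ref{sub1}) applied to $Y_{p-1}\xra{l_p,r_p}Y_p'$, and does not require a separate appeal to~(\ref{IH}).
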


\begin{proof}
It suffices to show only part (2). We first show the ``only if'' part of (2). Assume that $t_{p}$ is (MHR 2); recall that $Y_{p-1} \xra{l_{p},r_{p}} Y_{p}' \xra{\ol{r_{p}-1}, \ol{l_{p}-1}} Y_{p}$. It follows from Lemma~\ref{sub} (\ref{sub1}) (applied to $Y = Y_{p}'$ and $Y\la i,j \ra = Y_{p}$) that $\ol{l_{p}-1} \in I(Y_{p}') = (I(Y_{p-1}) \bs \{r_{p}\}) \cup \{l_{p}-1\}$. Thus we have $\ol{l_{p}-1} \in I(Y_{p-1}) \bs \{r_{p}\}$ or $\ol{l_{p}-1} = l_{p}-1$.
We next show the ``if'' part of (2); by Remark~\ref{MHR 2}, and Lemmas~\ref{at most} and \ref{sub}~(\ref{sub1}), it suffices to show that $\ol{r_{p}} \notin I(Y_{p}')$ and $\ol{l_{p}-1} \in I(Y_{p}')$. Because $I(Y_{p}') = (I(Y_{p-1}) \bs \{r_{p}\}) \cup \{l_{p}-1\}$, it is obvious from the assumption that $\ol{l_{p}-1} \in I(Y_{p}')$. Let us show that $\ol{r_{p}} \notin I(Y_{p}')$. Suppose, for a contradiction, that $\ol{r_{p}} \in I(Y_{p}')$. Since $I(Y_{p}') = (I(Y_{p-1}) \bs \{r_{p}\}) \cup \{l_{p}-1\}$, and since $\ol{r_{p}} \neq l_{p}-1$, we have $\ol{r_{p}} \in I(Y_{p-1}) \bs \{r_{p}\} \subset I(Y_{p-1})$, and hence $r_{p} \in I(Y_{p-1}^{D})$ by Lemma~\ref{sub} (\ref{sub2}). If $c+1-\chi \leq r_{p}$, then $r_{p} \in I_{R}(Y_{p-1}^{D})$. Since $r_{p} \in I_{R}(Y_{p-1})$ by Lemma~\ref{sub} (\ref{sub1}) (applied to $Y_{p-1} \xra{l_{p},r_{p}} Y_{p}'$), we get $r_{p} \in I_{R}(Y_{p-1}) \cap I_{R}(Y_{p-1}^{D})$, which contradicts the induction hypothesis (\ref{IH}). If $c+1-\chi > r_{p}$, then $c+1-\chi \leq c+1 = \ol{c+1-\chi} < \ol{r_{p}}$, which implies that $\ol{r_{p}} \in I_{R}(Y_{p-1})$. Since $r_{p} \in I(Y_{p-1})$, we have $\ol{r_{p}} \in I_{R}(Y_{p-1}^{D})$ by Lemma~\ref{sub} (\ref{sub2}). Hence we get $\ol{r_{p}} \in I_{R}(Y_{p-1}) \cap I_{R}(Y_{p-1}^{D})$, which contradicts the induction hypothesis (\ref{IH}). Therefore we obtain $\ol{r_{p}} \notin I(Y_{p}')$, as desired. Thus we have proved the proposition.
\end{proof}
If $t_{p}$ is (MHR 1) (recall that $Y_{p}' = Y_{p}$ and $Y_{p}^{D\prime} = Y_{p}^{D}$ in this case), then we see by Lemma~\ref{neq condition} and Proposition~\ref{prop12} (\ref{tMHR 1}) that $I_{R}(Y_{p}) \cap I_{R}(Y_{p}^{D}) = \emptyset$.
Assume that $t_{p}$ is (MHR 2), or equivalently, $\ol{l_{p}-1} \in I(Y_{p-1}) \bs \{r_{p}\}$ or $l_{p}-1 = \ol{l_{p}-1}$ by Proposition~\ref{prop12} (\ref{tMHR 2}). Because $Y_{p-1} \xra{l_{p},r_{p}} Y_{p}' \xra{\ol{r_{p}-1}, \ol{l_{p}-1}} Y_{p}$ in this case, it follows from Lemma~\ref{sub} (\ref{sub1}) that
\begin{align}
I_{R}(Y_{p}) \bs \{\ol{r_{p}}, l_{p}-1\} &= I_{R}(Y_{p-1}) \bs \{r_{p}, \ol{l_{p}-1}\}, \label{Y''} \\
I_{R}(Y_{p}^{D}) \bs \{r_{p}, \ol{l_{p}-1}\} &= I_{R}(Y_{p-1}^{D}) \bs \{\ol{r_{p}}, l_{p}-1\}. \label{Y''D}
\end{align}
Hence, by (\ref{Y''}) and (\ref{Y''D}), together with the induction hypothesis (\ref{IH}), we obtain $I_{R}(Y_{p}) \cap I_{R}(Y_{p}^{D}) = \emptyset$. Thus we have proved (I) $\Rightarrow$ (I\hspace{-0.2mm}I\hspace{-0.2mm}I) in Theorem~\ref{main}.

Conversely, we prove (I\hspace{-0.2mm}I\hspace{-0.2mm}I) $\Rightarrow$ (I), that is, $Y \in \mcS(Y_{m,n})$ if $I_{R}(Y) \cap I_{R}(Y^{D}) = \emptyset$. We show by (descending)  induction on $\la I(Y) \ra \coloneqq \sum_{i \in I(Y)} i$. It is obvious that $Y_{m,n} \in \mcS(Y_{m,n})$. Assume that $\la I(Y) \ra < \la I(Y_{m,n}) \ra$. Since $I(Y_{m,n}) = [n+1,m+n]$, and $I(Y) \neq I(Y_{m,n})$ with $\# I(Y) = m$,  there exists $r \notin I(Y)$ such that $n+1 \leq r$. Also, there exists $l \leq r$ such that $l-1 \in I(Y)$; note that $l-1 < r$. Here we show that $\ol{l-1} \notin I(Y)$. Suppose, for a contradiction, that $\ol{l-1} \in I(Y)$. If $c+1-\chi \geq l-1$, then $c+1-\chi \leq c+1 = \ol{c+1-\chi} \leq \ol{l-1}$, and hence $\ol{l-1} \in I_{R}(Y)$. By Lemma~\ref{sub} (\ref{sub2}) applied to $l-1 \in I(Y)$, it follows that $\ol{l-1} \in I_{R}(Y^{D})$. Thus we obtain $\ol{l-1} \in I_{R}(Y) \cap I_{R}(Y^{D})$, which contradicts the assumption that $I_{R}(Y) \cap I_{R}(Y^{D}) = \emptyset$. If $c+1-\chi < l-1$, then $l-1 \in I_{R}(Y^{D})$ because $\ol{l-1} \in I(Y)$. Since $l-1 \in I_{R}(Y)$, we get $l-1 \in I_{R}(Y) \cap I_{R}(Y^{D})$, which contradicts the assumption that $I_{R}(Y) \cap I_{R}(Y^{D}) = \emptyset$. Therefore we obtain $\ol{l-1} \notin I(Y)$.

\begin{prop}\label{hukugen}
Keep the setting above.
\begin{enumerate}[(1)]
\item If $\ol{r} \notin I(Y)$ or $\ol{r} = l-1$, then there exists a (unique) Young diagram $Y'$ such that $I(Y') = (I(Y) \bs \{l-1\}) \cup \{r\}$ and $I(Y'^{D}) = (I(Y^{D}) \bs \{\ol{l-1}\}) \cup \{\ol{r}\}$. Furthermore, $Y' \in \mcS(Y_{m,n})$, and $Y' \xra{\text{(MHR 1)}} Y$.
\item If $\ol{r} \in I(Y)$ and $\ol{r} \neq l-1$, then there exists a (unique) Young diagram $Y''$ such that $I(Y'') = (I(Y) \bs \{\ol{r}, l-1\}) \cup \{r, \ol{l-1}\}$ and $I(Y''^{D}) = (I(Y^{D}) \bs \{r, \ol{l-1}\}) \cup \{\ol{r}, l-1\}$. Furthermore, $Y'' \in \mcS(Y_{m,n})$, and $Y'' \xra{\text{(MHR 2)}} Y$.
\end{enumerate}
\end{prop}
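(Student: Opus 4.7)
The plan for both parts is to construct the prescribed Young diagram via the bijection $I$, verify that the disjointness condition $I_{R} \cap I_{R}^{D} = \emptyset$ is preserved so that the inductive hypothesis of the main proof of (I\hspace{-0.2mm}I\hspace{-0.2mm}I) $\Rightarrow$ (I) applies, and confirm the move type using Lemma~\ref{sub}~(\ref{sub1}), Proposition~\ref{prop12}, and Remark~\ref{MHR 2}. For part (1): since $l-1 \in I(Y)$, $r \notin I(Y)$, and $l-1 < r$, the set $(I(Y) \bs \{l-1\}) \cup \{r\}$ has exactly $m$ elements, so $Y'$ exists uniquely, and the formula for $I(Y'^{D})$ follows from Lemma~\ref{sub}~(\ref{sub2}). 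To verify $I_{R}(Y') \cap I_{R}(Y'^{D}) = \emptyset$, I would run a four-way case analysis on a hypothetical $x$ in the intersection according to the side of the symmetric difference it lies in: the ``old--old'' case contradicts $I_{R}(Y) \cap I_{R}(Y^{D}) = \emptyset$; the cases $x = \ol{r}$ on the $Y'$-side and $x = r$ on the $Y'^{D}$-side both force $\ol{r} \in I(Y)$ and $\ol{r} \neq l-1$ (using Lemma~\ref{sub}~(\ref{sub2}) in the second), contradicting the hypothesis of part (1); and $x = r = \ol{r}$ would require $r = (m+n+1)/2$, impossible since $r \geq n+1$ and $m \leq n$. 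Since $\la I(Y') \ra - \la I(Y) \ra = r - (l-1) > 0$, the induction hypothesis gives $Y' \in \mcS(Y_{m,n})$. Finally, $Y' \xra{l,r} Y$ is a valid hook removal by Lemma~\ref{sub}~(\ref{sub1}), and by Proposition~\ref{prop12}~(\ref{tMHR 2}) it is (MHR 2) only if $\ol{l-1} \in I(Y') \bs \{r\} = I(Y) \bs \{l-1\}$ or $l-1 = \ol{l-1}$, both precluded by the facts $\ol{l-1} \notin I(Y)$ and $l-1 \neq \ol{l-1}$ established in the main proof.

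For part (2): here $\{\ol{r}, l-1\} \subset I(Y)$ and $\{r, \ol{l-1}\} \cap I(Y) = \emptyset$, with $\ol{r} \neq l-1$ by hypothesis and $r \neq \ol{l-1}$ since $r = \ol{l-1}$ would give $\ol{r} = l-1$; so the prescribed index set has exactly $m$ elements and defines $Y''$ uniquely. A parallel case analysis, now with two old elements removed and two new ones added on each side, yields $I_{R}(Y'') \cap I_{R}(Y''^{D}) = \emptyset$, where the potential ``new--new'' collisions $\{r, \ol{l-1}\} \cap \{\ol{r}, l-1\}$ are ruled out by $\ol{r} \neq l-1$, the impossibility of $r = \ol{r}$, and $l-1 \neq \ol{l-1}$. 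Since $\la I(Y'') \ra - \la I(Y) \ra = 2(r-l+1) > 0$, induction yields $Y'' \in \mcS(Y_{m,n})$. To confirm the (MHR 2) move, let $Y'''$ be determined by $I(Y''') = (I(Y'') \bs \{r\}) \cup \{l-1\} = (I(Y) \bs \{\ol{r}\}) \cup \{\ol{l-1}\}$; two applications of Lemma~\ref{sub}~(\ref{sub1}) give $Y'' \xra{l,r} Y'''$ and $Y''' \xra{\ol{r-1},\ol{l-1}} Y$, and Remark~\ref{MHR 2} forces the composite to be (MHR 2).

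The main obstacle is the bookkeeping of the case analyses showing $I_{R} \cap I_{R}^{D}$ remains empty after the index-set replacement: each case corresponds to a potential collision in the right half $[c+1-\chi, m+n]$ introduced by adding $\{r\}$ or $\{r, \ol{l-1}\}$, and ruling it out requires invoking the appropriate hypothesis on $\ol{r}$ together with the previously-established facts $\ol{l-1} \notin I(Y)$, $l-1 \neq \ol{l-1}$, and the location estimate $r \geq n+1$. The dichotomy between (MHR 1) in part (1) and (MHR 2) in part (2) is then cleanly dictated by whether $\ol{r}$ was already present in $I(Y)$.
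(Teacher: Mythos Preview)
Your proposal is correct and follows essentially the paper's approach: construct the preimage via the bijection $I$, verify the disjointness $I_{R}\cap I_{R}^{D}=\emptyset$ (which the paper leaves as ``easily verified'' but which you spell out by cases), invoke the descending induction on $\la I(\cdot)\ra$, and confirm the move type. The only cosmetic differences are that the paper justifies (MHR~1) directly from $\ol{l-1}\notin I(Y)$ via Remark~\ref{MHR 2} rather than through Proposition~\ref{prop12}, and in part~(2) factors the (MHR~2) move as $Y''\xra{\ol{r-1},\ol{l-1}}Y'\xra{l,r}Y$ instead of your $Y''\xra{l,r}Y'''\xra{\ol{r-1},\ol{l-1}}Y$; both orderings yield the same endpoints.
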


\begin{proof}
(1) Recall that $l-1 \in I(Y)$ and $r \notin I(Y)$, which implies that $(I(Y) \bs \{l-1\}) \cup \{r\} \in \binom{[1,m+n]}{m}$. Since $I \colon \mcF(Y_{m,n}) \to \binom{[1,m+n]}{m}$ is a bijection, there exists unique $Y' \in \mcF(Y_{m,n})$ such that $I(Y') = (I(Y) \bs \{l-1\}) \cup \{r\}$; note that $I(Y'^{D}) = (I(Y^{D}) \bs \{\ol{l-1}\}) \cup \{\ol{r}\}$ by Lemma~\ref{sub} (\ref{sub2}). Then it follows from Lemma~\ref{sub} (\ref{sub1}) that $Y' \xra{l,r} Y$. Because $\ol{r} \notin I(Y)$ or $\ol{r} = l-1$ by the assumption of (1), and $I_{R}(Y) \cap I_{R}(Y^{D}) = \emptyset$ by assumption, it can be easily verified that $I_{R}(Y') \cap I_{R}(Y'^{D}) = \emptyset$. Since $l-1 < r$, we have $\la I(Y') \ra > \la I(Y) \ra$, and hence $Y' \in \mcS(Y_{m,n})$ by the induction hypothesis. Because $\ol{l-1} \notin I(Y)$, we see from Remark~\ref{MHR 2} that there does not exist a box $(i,j) \in Y$ such that $Y \xra{\ol{r-1}, \ol{l-1}} Y\la i,j \ra$. Thus we obtain $Y' \xra{\text{(MHR 1)}} Y$, as desired.

\noindent (2) Let $Y'$ be as in the proof of part (1). Since $\ol{r} \in I(Y)$ and $\ol{r} \neq l-1$ by the assumption of (2), and $\ol{l-1} \notin I(Y)$ as seen above,
\[
(I(Y') \bs \{\ol{r}\}) \cup \{\ol{l-1}\} = (I(Y) \bs \{\ol{r}, l-1\}) \cup \{r, \ol{l-1}\} \in \binom{[1,m+n]}{m}.
\]
Thus there exists $Y'' \in \mcF(Y_{m,n})$ such that $I(Y'') = (I(Y) \bs \{\ol{r}, l-1\}) \cup \{r, \ol{l-1}\}$; note that $I(Y''^{D}) = (I(Y^{D}) \bs \{r, \ol{l-1}\}) \cup \{\ol{r}, l-1\}$ by Lemma~\ref{sub} (\ref{sub2}). It follows from Lemma~\ref{sub} (\ref{sub1}) that $Y'' \xra{\ol{r-1}, \ol{l-1}} Y' \xra{l,r} Y$. Because $\ol{r} \in I(Y)$ and $\ol{r} \neq l-1$ by the assumption of (2), and $I_{R}(Y) \cap I_{R}(Y^{D}) = \emptyset$ by assumption, it can be easily verified that $I_{R}(Y'') \cap I_{R}(Y''^{D}) = \emptyset$. Since $l-1 < r$ and $\ol{l-1} > \ol{r}$, we have $\la I(Y'') \ra > \la I(Y) \ra$, and hence $Y'' \in \mcS(Y_{m,n})$ by the induction hypothesis. We see from Lemma~\ref{at most} that $Y'' \xra{\text{(MHR 2)}} Y$, as desired.
\end{proof}

By Proposition~\ref{hukugen}, we obtain $Y \in \mcS(Y_{m,n})$. This completes the proof of \linebreak[4](I\hspace{-0.2mm}I\hspace{-0.2mm}I) $\Rightarrow$ (I), and hence (I) $\Leftrightarrow$ (I\hspace{-0.2mm}I\hspace{-0.2mm}I). The equivalence (I\hspace{-0.2mm}I) $\Leftrightarrow$ (I\hspace{-0.2mm}I\hspace{-0.2mm}I) follows from the equivalence (I) $\Leftrightarrow$ (I\hspace{-0.2mm}I\hspace{-0.2mm}I) since $I_{R}(Y^{D}) \cap I_{R}((Y^{D})^{D}) = I_{R}(Y) \cap I_{R}(Y^{D})$.

Finally, let us show the equivalence (I\hspace{-0.2mm}I\hspace{-0.2mm}I) $\Leftrightarrow$ (I\hspace{-0.2mm}V). Let $Y \in \mcF(Y_{m,n})$, and $\lala \in \mcY_{m}(m+n)$ be such that $Y = Y_{\lambda}$. We first show (I\hspace{-0.2mm}V) $\Rightarrow$ \linebreak[4](I\hspace{-0.2mm}I\hspace{-0.2mm}I). Obviously, if $I_{R}(Y) \cap I_{R}(Y^{D}) \neq \emptyset$, then $I(Y) \cap I(Y^{D}) \neq \emptyset$. It follows from Subsection~\ref{subsec21} that
\begin{align*}
I(Y) &= \{\lambda_{p}+m-p+1 \mid 1 \leq p \leq m\}, \\
I(Y^{D})&= \{n-\lambda_{q}+q \mid 1 \leq q \leq m\}.
\end{align*}
Hence, $I(Y) \cap I(Y^{D}) \neq \emptyset$ if and only if $\lambda_{i}+m-i+1 = n-\lambda_{j}+j$ (or equivalently, $\lambda_{i}+\lambda_{j} = n-m+i+j-1$) for some $1 \leq i, j \leq m$. Thus we have shown (I\hspace{-0.2mm}V) $\Rightarrow$ (I\hspace{-0.2mm}I\hspace{-0.2mm}I).

We next show (I\hspace{-0.2mm}I\hspace{-0.2mm}I) $\Rightarrow$ (I\hspace{-0.2mm}V). Assume that $\lambda_{i} + \lambda_{j} = n-m+i+j-1$ for some $1 \leq i , j \leq m$; we may assume that $i \leq j$. As seen above, we have $\lambda_{i}+m-i+1 \in I(Y) \cap I(Y^{D})$. Hence it suffices to show that if $\lambda_{i}+\lambda_{j} = n-m+i+j-1$, then $\lambda_{i}+m-i+1 \in [c+1-\chi, m+n]$. Indeed, suppose, for a contradiction, that $\lambda_{i}+m-i+1 \notin [c+1-\chi, m+n]$. Then, $\lambda_{i}+m-i+1 < c+1-\chi$ or $m+n < \lambda_{i}+m-i+1$. Because $\lambda_{i} +m-i+1 \leq n+m-i+1 \leq n+m$, we get $\lambda_{i}+m-i+1 < c+1-\chi$. Since $i \leq j$ (and hence $\lambda_{i} \geq \lambda_{j}$) and $\lambda_{i} < c-m-\chi+i$, we have $\lambda_{i} + \lambda_{j} \leq 2\lambda_{i} < (m+n-1+\chi)-2m-2\chi +2i = n-m-\chi +2i-1 \leq n-m+i+j-1 = \lambda_{i}+\lambda_{j}$, which is a contradiction. Therefore, we conclude that $\lambda_{i}+m-i+1 \in [c+1-\chi, m+n]$. Thus we have shown (I\hspace{-0.2mm}I\hspace{-0.2mm}I) $\Rightarrow$ (I\hspace{-0.2mm}V), thereby completing the proof of (I\hspace{-0.2mm}I\hspace{-0.2mm}I) $\Leftrightarrow$ (I\hspace{-0.2mm}V).

\section{Application.}\label{sec6}
Let $t \in \BZ_{\geq 0}$ and $m,n \in \BN$ such that $t \leq m \leq n$. For $(\lambda_{1}, \ldots , \lambda_{t}) \in \mcY_{t}(t+n)$, we set 
\[
\bracket{\lambda_{1}, \ldots ,\lambda_{t}} \coloneqq (\lambda_{1} , \ldots , \lambda_{t} , \lambda_{t+1} , \ldots , \lambda_{m}) \in \mcY_{m}(m+n),
\]
with $\lambda_{k} \coloneqq 0$ for $t+1 \leq k \leq m$.
\begin{thm}\label{t rows}
Under the notation and setting above, $Y_{\bracket{\lambda_{1}, \ldots , \lambda_{t}}} \in \mcS(Y_{m,n})$ if and only if $Y_{(\lambda_{1} , \ldots , \lambda_{t})} \in \mcS(Y_{t,n-m+t})$. Moreover, the Grundy value of $Y_{\bracket{\lambda_{1}, \ldots ,\lambda_{t}}} \in \mcS(Y_{m,n})$ is equal to the Grundy value of $Y_{(\lambda_{1} , \ldots , \lambda_{t})} \in \mcS(Y_{t,n-m+t})$.
\end{thm}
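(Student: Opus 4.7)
The plan is to reduce the problem to the characterization (I)$\Leftrightarrow$(IV) of Theorem~\ref{main}, together with a comparison of the two unimodal numberings via a constant shift. Write $\lambda \coloneqq \bracket{\lambda_1, \ldots, \lambda_t}$ and $\mu \coloneqq (\lambda_1, \ldots, \lambda_t)$ for brevity; the cases $t = m$ and $t = 0$ are immediate from the definitions, so I may assume $0 < t < m$.

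To prove the equivalence of game positions I would apply (IV) to $\lambda$ in $Y_{m,n}$ and partition the pairs $(i,j)$ with $1 \leq i \leq j \leq m$ into (a) $j \leq t$, (b) $i \leq t < j$, and (c) $t < i$. Case (a) literally coincides with (IV) for $\mu$ in $Y_{t, n-m+t}$, since $(n-m+t)-t+i+j-1 = n-m+i+j-1$. Case (c) is automatic because both $\lambda_i$ and $\lambda_j$ vanish while $n-m+i+j-1 \geq 1$. In case (b) the condition reads $\lambda_i \neq n-m+i+j-1$; since $\lambda_i \leq \lambda_1$ and $n-m+i+j-1 \geq n-m+t+1$, it holds for every such pair precisely when $\lambda_1 \leq n-m+t$, because for any $\lambda_1 \in [n-m+t+1, n]$ equality is attained at $(i,j) = (1,\, \lambda_1-n+m)$. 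Combining, (IV) for $\lambda$ in $Y_{m,n}$ is equivalent to $Y_\mu \in \mcF(Y_{t, n-m+t})$ together with (IV) for $\mu$ in $Y_{t, n-m+t}$, which by Theorem~\ref{main} amounts to $Y_\mu \in \mcS(Y_{t, n-m+t})$.

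For the Grundy-value assertion, let $c_{m,n}$ and $c_{t,n-m+t}$ denote the unimodal numberings of $Y_{m,n}$ and $Y_{t,n-m+t}$. The key identity
\[
c_{m,n}(i,j) = \min(j-i+m,\, i-j+n) = (m-t) + \min(j-i+t,\, i-j+n-m+t) = c_{t,n-m+t}(i,j) + (m-t)
\]
holds for every $(i,j) \in Y_{t, n-m+t}$. Hence the two numbering multisets of any subset of such a diagram differ by a uniform shift of $m-t$, so the equalities appearing in the definition of (MHR~2) hold in one game iff they hold in the other and yield the same option. Using the bijection $I$, the properties ``$Y$ has at most $t$ rows'' and ``$\lambda_1 \leq n-m+t$'' translate to $[1, m-t] \subset I(Y)$ and $I(Y) \subset [1, n+t]$, respectively. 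For an (MHR~1) move $Y \xra{l,r} Y'$, the constraint $l-1 \notin [1, m-t]$ forces $l-1 \geq m-t+1$, and hence $r \geq m-t+2$; Lemma~\ref{sub}~(\ref{sub1}) then yields $[1, m-t] \subset I(Y') \subset [1, n+t]$. For the (MHR~2) move $Y \xra{l,r} Y' \xra{\ol{r-1}, \ol{l-1}} Y''$, the same bounds give $\ol{l-1}, \ol{r} \in [m-t+1, n+t-1]$, so $I(Y'')$ also lies in $[1, n+t]$ and contains $[1, m-t]$.

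Therefore each option of $Y_\lambda$ in $\mcS(Y_{m,n})$, viewed as a Young diagram, is an option of $Y_\mu$ in $\mcS(Y_{t, n-m+t})$, and conversely. The Grundy equality then follows by induction on $|\lambda| = \sum_k \lambda_k$: the base $\lambda = 0$ yields the empty diagram with Grundy value $0$ in both games, and in the inductive step the two mex-sets coincide by the induction hypothesis applied to each common option. The main technical obstacle is the bookkeeping in the (MHR~2) case: the intermediate diagram $Y'$ is not itself a game position in either game, so one must verify simultaneously at both hook-removal stages that $[1, m-t] \subset I(\cdot) \subset [1, n+t]$ is preserved.
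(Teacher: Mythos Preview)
Your proof is correct and follows the same strategy as the paper: use condition~(IV) of Theorem~\ref{main} with the three-way split on $(i,j)$ for the equivalence of game positions, and the constant shift $c_{m,n}(i,j) = c_{t,n-m+t}(i,j) + (m-t)$ of the unimodal numbering to identify the option sets and then induct on $|\lambda|$ for the Grundy values. Your $I$-based bookkeeping in the (MHR~2) case is more explicit than the paper (which simply asserts a natural bijection between the two option sets after exhibiting the numbering shift) and in fact slightly more than necessary, since every option of $Y_\lambda$ is automatically a subdiagram of $Y_\lambda \subset Y_{t,n-m+t}$; but the argument is valid.
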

\begin{proof}
Since $\lambda_{k} = 0$ for $t+1 \leq k \leq m$, it follows from Theorem~\ref{main} that $Y_{\bracket{\lambda_{1}, \ldots ,\lambda_{t}}} \in \mcS(Y_{m,n})$ if and only if $\lambda_{i}+\lambda_{j} \neq n-m+i+j-1$ for all $1 \leq i \leq j \leq t$ and
\begin{equation}\label{sk}
\lambda_{s} \neq n-m+s+k-1\ \text{for all}\ 1 \leq s \leq t\ \text{and}\ t+1 \leq k \leq m;
\end{equation}
note that $0 \neq n-m+k+l-1$ for all $t+1 \leq k,\, l \leq m$ since $m \leq n$. Also, notice that (\ref{sk}) is equivalent to $\lambda_{1} \leq n-m+t$. Therefore, we deduce that $Y_{\bracket{\lambda_{1}, \ldots , \lambda_{t}}} \in \mcS(Y_{m,n})$ if and only if $Y_{(\lambda_{1} , \ldots , \lambda_{t})} \in \mcS(Y_{t,n-m+t})$.

Next, we show the assertion on the Grundy values. Assume that $Y_{(\lambda_{1}, \ldots , \lambda_{t})} \in \mcS(Y_{t,n-m+t})$, or equivalently, $Y_{\bracket{\lambda_{1}, \ldots , \lambda_{t}}} \in \mcS(Y_{m,n})$. If $t = 0$ or $\lambda_{1} = 0$, then $Y_{\bracket{\lambda_{1}, \ldots , \lambda_{t}}} = Y_{(\lambda_{1} , \ldots , \lambda_{t})} = \emptyset$ (the empty Young diagram). Thus, both the Grundy value of $Y_{\bracket{\lambda_{1}, \ldots , \lambda_{t}}} = \emptyset$ in $\mcS(Y_{m,n})$ and the Grundy value of $Y_{(\lambda_{1} , \ldots , \lambda_{t})} = \emptyset$ in $\mcS(Y_{t,n-m+t})$ are equal to $0$.
Assume that $1 \leq t$ and $1 \leq \lambda_{1}$. Since $m \leq n$ and $1 \leq t$, we get $m-t+1 \leq n+t-1$. Hence, we have $c\,(t,1) = \text{min}\,(1-t+m, t-1+n) = m-t+1$. Since $m-t+1 \leq m+\lambda_{1}-1$, and since $\lambda_{1} \leq n-m+t$ as seen above, we have $c\,(1, \lambda_{1}) = \text{min}\,(\lambda_{1}-1+m, 1-\lambda_{1}+n) \geq m-t+1$.
Thus, we obtain $\text{min}\, \{c\,(p,q) \mid (p,q) \in Y_{\bracket{\lambda_{1}, \ldots , \lambda_{t}}}\} \geq m-t+1$:
\begin{figure}[htbp]
\centering
\begin{tikzpicture}
\draw[line width=1pt,->] (0,0)--(4,0)node[right]{$j$};
\draw[line width=1pt,->] (0,0)--(0,-3)node[below]{$i$};
\draw[dashed] (3.6,0)node[above]{$n$}--(3.6,-2.6)--(0,-2.6)node[left]{$m$};
\draw(0,-2)node[left]{$t$}--(2,-2)--(2,-1.5)--(2,-1.5)--(3,-1.5)--(3,0);
\draw(0.32,-1.75)node{\scalebox{0.4}[0.8]{$m-t+1$}};
\draw(0.8,-1.75)node{\scalebox{0.5}[0.8]{$<$}};
\draw(1.1,-1.75)node{\scalebox{0.5}[0.8]{$\cdots$}};
\draw(0.28,-1.4)node{\scalebox{0.5}[0.8]{\rotatebox{90}{$<$}}};
\draw(0.28,-0.9)node{\scalebox{0.5}[0.8]{$\vdots$}};
\draw(0.28,-0.6)node{\scalebox{0.5}[0.8]{\rotatebox{90}{$<$}}};
\draw(0.28,-0.3)node{\scalebox{0.5}[0.8]{$m$}};
\draw(0.5,-0.3)node{\scalebox{0.5}[0.8]{$<$}};
\draw(0.8,-0.3)node{\scalebox{0.5}[0.8]{$\cdots$}};
\draw(1.1,-0.3)node{\scalebox{0.5}[0.8]{$<$}};
\draw(1.3,-0.3)node{\scalebox{0.5}[0.8]{$c$}};
\draw(1.5,-0.3)node{\scalebox{0.5}[0.8]{$>$}};
\draw(1.9,-0.3)node{\scalebox{0.5}[0.8]{$\cdots$}};
\draw(2.2,-0.3)node{\scalebox{0.5}[0.8]{$>$}};
\draw(2.65,-0.3)node{\scalebox{0.5}[0.8]{$c\,(1,\lambda_{1})$}};
\draw(2.6,-0.6)node{\scalebox{0.5}[0.8]{\rotatebox{90}{$>$}}};
\draw(2.6,-0.85)node{\scalebox{0.5}[0.8]{$\vdots$}};
\draw(2,-3.5)node{if $\lambda_{1} > c-m+1$};

\draw[line width=1pt,->] (5,0)--(10,0)node[right]{$j$};
\draw[line width=1pt,->] (5,0)--(5,-3)node[below]{$i$};
\draw[dashed] (9.6,0)node[above]{$n$}--(9.6,-2.6)--(5,-2.6)node[left]{$m$};
\draw(5,-2)node[left]{$t$}--(6.5,-2)--(6.5,-1.5)--(7,-1.5)--(7.5,-1.5)--(7.5,0);
\draw(5.32,-1.75)node{\scalebox{0.4}[0.8]{$m-t+1$}};
\draw(5.8,-1.75)node{\scalebox{0.5}[0.8]{$<$}};
\draw(6.1,-1.75)node{\scalebox{0.5}[0.8]{$\cdots$}};
\draw(5.28,-1.4)node{\scalebox{0.5}[0.8]{\rotatebox{90}{$<$}}};
\draw(5.28,-0.9)node{\scalebox{0.5}[0.8]{$\vdots$}};
\draw(5.28,-0.6)node{\scalebox{0.5}[0.8]{\rotatebox{90}{$<$}}};
\draw(5.28,-0.3)node{\scalebox{0.5}[0.8]{$m$}};
\draw(5.5,-0.3)node{\scalebox{0.5}[0.8]{$<$}};
\draw(6,-0.3)node{\scalebox{0.5}[0.8]{$\cdots\ \cdots$}};
\draw(6.5,-0.3)node{\scalebox{0.5}[0.8]{$<$}};
\draw(7,-0.3)node{\scalebox{0.5}[0.8]{$c\,(1,\lambda_{1})$}};
\draw(7,-0.65)node{\scalebox{0.5}[0.8]{\rotatebox{90}{$<$}}};
\draw(7,-0.9)node{\scalebox{0.5}[0.8]{$\vdots$}};
\draw(7.5,-3.5)node{if $\lambda_{1} \leq c-m+1$};
\end{tikzpicture}
\caption{Numbering of $Y_{\bracket{\lambda_{1}, \ldots , \lambda_{t}}}$ in $\mcS(Y_{m,n})$.} \label{figg}
\end{figure}

\noindent
We notice that
\begin{enumerate}[(i)]
\item in $Y_{\bracket{\lambda_{1}, \ldots , \lambda_{t}}} \in \mcS(Y_{m,n})$ with the unimodal numbering $c\,(p,q)$ for $(p,q) \in Y_{\bracket{\lambda_{1}, \ldots , \lambda_{t}}}$, if we replace $c\,(p,q)$ by $c\,(p,q)-m+t$, then we get $Y_{(\lambda_{1}, \ldots , \lambda_{t})} \in \mcS(Y_{t,n-m+t})$ with the unimodal numbering; \label{replace1}
\item in $Y_{(\lambda_{1}, \ldots , \lambda_{t})} \in \mcS(Y_{t,n-m+t})$ with the unimodal numbering $c'\,(p,q)$ for $(p,q) \in Y_{(\lambda_{1}, \ldots , \lambda_{t})}$, if we replace $c'\,(p,q)$ by $c'\,(p,q)+m-t$, then we get $Y_{\bracket{\lambda_{1}, \ldots , \lambda_{t}}} \in \mcS(Y_{m,n})$ with the unimodal numbering. \label{replace2}
\end{enumerate}
Here we give an example. Let $m = 3, n= 5$, and $t = 2$. Let $\lambda = (3 , 2 , 0) \in \mcY_{3}(8)$. In $Y_{\bracket{3,2}} \in \mcS(Y_{3,5})$ (resp., $Y_{(3,2)} \in \mcS(Y_{2,4})$) with the unimodal numbering $c\,(p,q)$ for $(p,q) \in Y_{\bracket{3,2}}$ (resp., $c'\,(p,q)$ for $(p,q) \in Y_{(3,2)}$), if we replace $c\,(p,q)$ by $c\,(p,q)-1$ (resp., $c'\,(p,q)$ by $c'\,(p,q)+1$), then we get $Y_{(3,2)} \in \mcS(Y_{2,4})$ (resp., $Y_{\bracket{3,2}} \in \mcS(Y_{3,5})$) with the unimodal numbering:
\begin{center}
\begin{tikzpicture}
\draw[line width=1pt,->] (0,0)--(3,0)node[above]{$j$};
\draw[line width=1pt,->] (0,0)--(0,-2)node[below]{$i$};
\draw[dashed](0,-0.5)--(2.5,-0.5);
\draw[dashed](0,-1)--(2.5,-1);
\draw[dashed](0,-1.5)--(2.5,-1.5);
\draw[dashed](2.5,0)--(2.5,-1.5);
\draw[dashed](2,0)--(2,-1.5);
\draw[dashed](1.5,0)--(1.5,-1.5);
\draw[dashed](1,0)--(1,-1.5);
\draw[dashed](0.5,0)--(0.5,-1.5);
\draw[dashed](0,0)--(0,-1.5);
\draw[line width=0.6pt](0.5,0)--(0.5,-1);
\draw[line width=0.6pt](1,0)--(1,-1);
\draw[line width=0.6pt](1.5,0)--(1.5,-0.5);
\draw[line width=0.6pt](0,-0.5)--(1.5,-0.5);
\draw[line width=0.6pt](0,-1)--(1,-1);
\draw(0.25,-0.25)node{$3$};
\draw(0.75,-0.25)node{$4$};
\draw(1.25,-0.25)node{$3$};
\draw(0.25,-0.75)node{$2$};
\draw(0.75,-0.75)node{$3$};
\draw[line width=1pt,->](3.75,-0.6)--(5.95,-0.6);
\draw(4.85,0.3)node{{\footnotesize replace the numbering}};
\draw(4.85,-0.2)node{{\footnotesize $c\, (p,q)$ by $c\, (p,q) - 1$}};
\draw[line width=1pt,<-](3.75,-1)--(5.95,-1);
\draw(4.85,-1.4)node{{\footnotesize replace the numbering}};
\draw(4.85,-1.9)node{{\footnotesize $c'\,(p,q)$ by $c'\,(p,q) + 1$}};
\draw[line width=1pt,->] (7,0)--(10,0)node[above]{$j$};
\draw[line width=1pt,->] (7,0)--(7,-2)node[below]{$i$};
\draw[line width=0.6pt](8,0)--(8,-1);
\draw[line width=0.6pt](7.5,0)--(7.5,-1);
\draw[line width=0.6pt](8,0)--(8,-0.5);
\draw[line width=0.6pt](7,-0.5)--(8.5,-0.5);
\draw[line width=0.6pt](7,-1)--(8,-1);
\draw[line width=0.6pt](8.5,0)--(8.5,-0.5);
\draw[dashed](7,-1)--(9,-1);
\draw[dashed](9,0)--(9,-1);
\draw[dashed](9,-0.5)--(8.5,-0.5)--(8.5,-1);
\draw(7.25,-0.25)node{$2$};
\draw(7.75,-0.25)node{$3$};
\draw(8.25,-0.25)node{$2$};
\draw(7.25,-0.75)node{$1$};
\draw(7.75,-0.75)node{$2$};
\end{tikzpicture}
\end{center}
\noindent
It is obvious that the operation (\ref{replace1}) is the inverse of the operation (\ref{replace2}). Moreover, there exists a natural bijection between $\mcOH(Y_{\bracket{\lambda_{1}, \ldots , \lambda_{t}}}) \subset \mcS(Y_{m,n})$ and $\mcOH(Y_{(\lambda_{1}, \ldots , \lambda_{t})}) \subset \mcS(Y_{t,n-m+t})$. Then the inductive argument shows that the Grundy value of $Y_{\bracket{\lambda_{1}, \ldots , \lambda_{t}}}$ in $\mcS(Y_{m,n})$ is equal to the Grundy value of $Y_{(\lambda_{1} , \ldots , \lambda_{t})}$ in $\mcS(Y_{t,n-m+t})$. This completes the proof of Theorem~\ref{t rows}.
\end{proof}

Assume that $m = 2$. Set $c_{i}(q) \coloneqq c+i+4q$ for $i \in \mathbb{Z}$ and $q \geq 0$. We know from \cite[Theorem~4.13]{AT} that a Young diagram $Y_{\lambda} \in \mcS(Y_{2,n})$ with $\lambda = (\lambda_{1}, \lambda_{2})$ is a $\mcP$-position if and only if
\begin{equation}\label{2,n}
\lambda \in
\begin{cases}
\mcC \cup \{(c_{1}(q) , c_{0}(q)), (c_{2}(q) , c_{1}(q)) \mid 0 \leq q \leq (p-1)\, /\, 2\} &\text{if}\ n-2 = 4p, \\
\mcC \cup \{(c_{2}(q) , c_{1}(q)), (c_{3}(q) , c_{2}(q)) \mid 0 \leq q \leq (p-1)\, /\, 2\} &\text{if}\ n-2 = 4p+1, \\
\mcC \cup \{(c_{0}(q) , c_{-1}(q)), (c_{1}(q) , c_{0}(q)) \mid 0 \leq q \leq p\, /\, 2\} &\text{if}\ n-2 = 4p+2, \\
\mcC \cup \{(2p+4 , 2p+2), (2p+5 , 2p+4)\} \\
\hspace{10mm} \cup\ \{(c_{1}(q) , c_{0}(q)), (c_{2}(q) , c_{1}(q)) \mid 1 \leq q \leq p\, /\, 2\} &\text{if}\ n-2=4p+3,
\end{cases}
\end{equation}
where $p \in \mathbb{Z}_{\geq 0}$, and $\mcC = \mcC(p) \coloneqq \{(2q , 2q) \mid 0 \leq q \leq p\}$.

The following is an immediate consequence of Theorem~\ref{t rows} and (\ref{2,n}).
\begin{coro}
We set $d_{i}(q) \coloneqq c-m+2+i+4q$ for $i \in \mathbb{Z}$ and $q \geq 0$. A Young diagram $Y_{\lambda} \in \mcS(Y_{m,n})$ having at most two rows is a $\mcP$-position if and only if
\[
\lambda \in
\begin{cases}
\mathcal{D} \cup \{\bracket{d_{1}(q),d_{0}(q)}, \bracket{d_{2}(q),d_{1}(q)} \mid 0 \leq q \leq (p-1)\, /\, 2\} &\text{if}\ n-m = 4p, \\
\mathcal{D} \cup \{\bracket{d_{2}(q),d_{1}(q)}, \bracket{d_{3}(q),d_{2}(q)} \mid 0 \leq q \leq (p-1)\, /\, 2\} &\text{if}\ n-m = 4p+1, \\
\mathcal{D} \cup \{\bracket{d_{0}(q),d_{-1}(q)}, \bracket{d_{1}(q),d_{0}(q)} \mid 0 \leq q \leq p\, /\, 2\} &\text{if}\ n-m = 4p+2, \\
\mathcal{D} \cup \{\bracket{2p+4,2p+2}, \bracket{2p+5,2p+4}\} \\
\hspace{10mm} \cup\ \{\bracket{d_{1}(q),d_{0}(q)}, \bracket{d_{2}(q),d_{1}(q)} \mid 1 \leq q \leq p\, /\, 2\} &\text{if}\ n-m=4p+3,
\end{cases}
\]
where $p \in \mathbb{Z}_{\geq 0}$, and $\mathcal{D} = \mathcal{D}(p) \coloneqq \{\bracket{2q,2q} \mid 0 \leq q \leq p\}$.
\end{coro}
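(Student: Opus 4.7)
The plan is to apply Theorem~\ref{t rows} with $t = 2$ and transport the known classification \eqref{2,n} of $\mcP$-positions in $\mcS(Y_{2,\cdot})$ over to the at-most-two-row case of $\mcS(Y_{m,n})$. First, since $Y_{\lambda}$ has at most two rows, write $\lambda = \bracket{\lambda_{1}, \lambda_{2}}$ for some $(\lambda_{1}, \lambda_{2}) \in \mcY_{2}(n-m+4)$. Theorem~\ref{t rows} (applied with $t=2$) then asserts that $Y_{\bracket{\lambda_{1},\lambda_{2}}} \in \mcS(Y_{m,n})$ if and only if $Y_{(\lambda_{1},\lambda_{2})} \in \mcS(Y_{2, n-m+2})$, and that in this case the two Grundy values coincide. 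Since a game position is a $\mcP$-position exactly when its Grundy value is $0$, it follows that $Y_{\lambda}$ is a $\mcP$-position in $\mcS(Y_{m,n})$ iff $Y_{(\lambda_{1},\lambda_{2})}$ is a $\mcP$-position in $\mcS(Y_{2, n-m+2})$. The task thus reduces to rewriting \eqref{2,n} under the substitution $n \mapsto n-m+2$.

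Next I would check that the constants match. Writing $\tilde{c}$ and $\chi'$ for the quantities $c$ and $\chi$ attached to $Y_{2, n-m+2}$, one has $\tilde{c} = (2 + (n-m+2) - 1 + \chi')/2 = (n-m+3+\chi')/2$. Since $2 + (n-m+2)$ has the same parity as $m+n$ (they differ by $2m-4$), we get $\chi' = \chi$, so
\[
\tilde{c} \;=\; \frac{n-m+3+\chi}{2} \;=\; \frac{m+n-1+\chi}{2} - m + 2 \;=\; c - m + 2,
\]
and therefore $\tilde{c} + i + 4q = c - m + 2 + i + 4q = d_{i}(q)$. This means the constants $c_{i}(q)$ appearing in \eqref{2,n}, taken for the game on $\mcS(Y_{2, n-m+2})$, are exactly the $d_{i}(q)$ in the statement. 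The case split in \eqref{2,n} is on the residue of $n - 2$ modulo $4$; after the substitution this becomes the residue of $n - m$ modulo $4$, matching the four cases of the corollary.

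Finally, under the bijection $(\lambda_{1}, \lambda_{2}) \mapsto \bracket{\lambda_{1}, \lambda_{2}}$, the diagonal family $\mcC = \{(2q, 2q) \mid 0 \leq q \leq p\}$ becomes $\mathcal{D} = \{\bracket{2q, 2q} \mid 0 \leq q \leq p\}$, each of the pair-families $\{(c_{i}(q), c_{i-1}(q)), (c_{i+1}(q), c_{i}(q))\}$ becomes the corresponding pair-family in the $d_{i}(q)$'s, and the two exceptional positions $(2p+4, 2p+2)$, $(2p+5, 2p+4)$ in the case $n-m = 4p+3$ transfer verbatim to $\bracket{2p+4, 2p+2}$ and $\bracket{2p+5, 2p+4}$. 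Combining this with the reduction in Step~1 yields the claimed list. There is no substantive obstacle here once Theorem~\ref{t rows} is available: the only care required is the parity verification $\chi' = \chi$ and the identity $\tilde{c} = c - m + 2$; the remainder is bookkeeping across the four residue classes.
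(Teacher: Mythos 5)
Your proposal is correct and is exactly the argument the paper intends: the paper states the corollary as an immediate consequence of Theorem~\ref{t rows} (with $t=2$) and (\ref{2,n}), and your write-up just makes the transfer explicit, including the key checks $\chi'=\chi$ and $\tilde{c}=c-m+2$ so that $c_{i}(q)$ for $\mcS(Y_{2,n-m+2})$ becomes $d_{i}(q)$ and the case split on $n-2 \bmod 4$ becomes one on $n-m \bmod 4$.
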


\noindent
(Yuki Motegi) Graduate School of Pure and Applied Sciences, University of Tsukuba, 1-1-1 Tennodai, Tsukuba, Ibaraki 305-8571, Japan

\noindent
E-mail address: y-motegi@math.tsukuba.ac.jp


\begin{thebibliography}{9}
\bibitem{AT} Abuku, T. and Tada, M., Multiple Hook Removing Game whose starting position is a rectangular Young diagram with the unimodal numbering, arXiv:2112.12963v1 (2021).
\bibitem{Ka} Kawanaka, N., Sato-Welter game and Kac-Moody Lie algebras, Topics in combinatorial representation theory (Kyoto, 2000), S\={u}rikaisekikenky\={u}sho K\={o}ky\={u}roku No. 1190 (2001), 95-106.
\bibitem{P1} Proctor, R. A., Minuscule elements of Weyl groups, the numbers game, and $d$-complete posets, J. Algebra 213 (1999), 272-303.
\bibitem{P2} Proctor, R. A., Dynkin diagram classification of $\lambda$-minuscule Bruhat lattices and of $d$-complete posets, J. Algebraic Combin. 9 (1999), 61-94.
\bibitem{Sato1} Sato, M. (notes by Enomoto, H.), On Maya game (in Japanese), Suugaku-no-Ayumi 15-1 (Special Issue : Mikio Sato) (1970), 73-84.
\bibitem{Si} Siegel, A. N., Combinatorial game theory, Graduate Studies in Mathematics, 146, American Mathematical Society, Providence, RI (2013).
\bibitem{Tada} Tada, M., Relation between the Weyl group orbits of fundamental weights for multiply-laced Kac-Moody algebras of finite type and $d$-complete posets (in Japanese), master thesis, University of Tsukuba (2019).
\bibitem{Welter} Welter, C. P., The theory of a class of games on a sequence of squares, in terms of the advancing operation in a special group, Nederl. Akad. Wetensch. Proc. Ser. A. 57 = Indagationes Math. 16 (1954), 194-200.
\end{thebibliography}
\end{document}